\def\@captype{figure}
\newtheorem{theorem}{Theorem}[section]
\newtheorem{lemma}[theorem]{Lemma}
\newtheorem{proposition}[theorem]{Proposition}
\newtheorem{remark}[theorem]{Remark}
\newtheorem{definition}[theorem]{Definition}
\newtheorem{cond}[theorem]{Condition}
\numberwithin{equation}{section}
\newcommand{\N}{\mathbb{N}}
\newcommand{\R}{\mathbb{R}}
\newcommand{\Conf}{\emph{\text{Conf}}}
\newcommand{\sss}{\scriptscriptstyle}
\newcommand{\TV}{\sss \mathrm{TV}}
\newcommand{\tmix}{t_{\mathrm{mix}}}
\newcommand{\eqan}[1]{\begin{align} #1 \end{align}}
\newcommand{\nn}{\nonumber}
\newcommand{\expec}{\mathbb{E}}
\newcommand{\prob}{\mathbb{P}}
\newcommand{\SP}{\mathsf{SP}}
\newcommand{\G}{\mathsf{G}}
\newcommand{\dist}{\mathrm{dist}}
\title{Mixing times of random walks \\ on dynamic configuration models}
\author{
Luca Avena
\footnotemark[1]
\\

Hakan G\"{u}lda\c{s}
\footnotemark[1]
\\

Remco van der Hofstad
\footnotemark[2]
\\

Frank den Hollander
\footnotemark[1]
}
\date{\today}
\begin{document}

\maketitle 

\begin{abstract}
The mixing time of a random walk, with or without backtracking, on a random graph generated 
according to the configuration model on $n$ vertices, is known to be of order $\log n$. In this 
paper we investigate what happens when the random graph becomes {\em dynamic}, namely, 
at each unit of time a fraction $\alpha_n$ of the edges is randomly rewired. Under mild conditions 
on the degree sequence, guaranteeing that the graph is locally tree-like, we show that for every 
$\varepsilon\in(0,1)$ the $\varepsilon$-mixing time of random walk without backtracking grows 
like $\sqrt{2\log(1/\varepsilon)/\log(1/(1-\alpha_n))}$ as $n \to \infty$, provided that $\lim_{n\to\infty} 
\alpha_n(\log n)^2=\infty$. The latter condition corresponds to a regime of fast enough graph 
dynamics. Our proof is based on a randomised stopping time argument, in combination with 
coupling techniques and combinatorial estimates. The stopping time of interest is the first time 
that the walk moves along an edge that was rewired before, which turns out to be close to a 
strong stationary time.
\end{abstract}

\medskip\noindent
\emph{Keywords:} 
Random graph, random walk, dynamic configuration model, mixing time, coupling.

\medskip\noindent
\emph{MSC 2010:} 
05C81, 
37A25, 
60C05, 
60J10. 

\medskip\noindent 
\emph{Acknowledgements:} 
The research in this paper is supported by the Netherlands Organisation for Scientific 
Research (NWO Gravitation Grant NETWORKS-024.002.003). The work of RvdH is
further supported by NWO through VICI grant 639.033.806. The authors are grateful to 
Perla Sousi and Sam Thomas for pointing out an error in an earlier draft of the paper.

\newpage


\section{Introduction and main result}


\subsection{Motivation and background}

The \emph{mixing time} of a Markov chain is the time it needs to approach its stationary 
distribution. For random walks on \emph{finite graphs}, the characterisation of the mixing 
time has been the subject of intensive study. One of the main motivations is the fact that 
the mixing time gives information about the geometry of the graph (see the books by 
Aldous and Fill~\cite{AF} and by Levin, Peres and Wilmer~\cite{LPW} for an overview 
and for applications).  Typically, the random walk is assumed to be `simple', meaning 
that steps are along edges and are drawn uniformly at random from a set of allowed 
edges, e.g.\ with or without backtracking.

In the last decade, much attention has been devoted to the analysis of mixing times for 
random walks on \emph{finite random graphs}. Random graphs are used as models for 
real-world networks. Three main models have been in the focus of attention: (1) the 
Erd\H{o}s-R\'enyi random graph (Benjamini, Kozma and Wormald~\cite{BKW}, Ding, 
Lubetzky and Peres~\cite{DLP}, Fountoulakis and Reed~\cite{FR}, Nachmias and 
Peres~\cite{NP}); (2) the configuration model (Ben-Hamou and Salez~\cite{B-HS},
Berestycki, Lubetzky, Peres and Sly~\cite{BLPS}, Bordenave, Caputo and Salez~\cite{BCS}, 
Lubetzky and Sly~\cite{LS}); (3) percolation clusters (Benjamini and Mossel~\cite{BM}).

Many real-world networks are dynamic in nature. It is therefore natural to study random 
walks on \emph{dynamic finite random graphs}. This line of research was initiated recently
by Peres, Stauffer and Steif~\cite{PSS} and by Peres, Sousi and Steif~\cite{PSS2}, who 
characterised the mixing time of a simple random walk on a dynamical percolation cluster 
on a $d$-dimensional discrete torus, in various regimes. The goal of the present paper is 
to study the mixing time of a random walk \emph{without backtracking} on a dynamic version 
of the configuration model.

The \emph{static} configuration model is a random graph with a prescribed degree sequence 
(possibly random). It is popular because of its mathematical tractability and its flexibility in 
modeling real-world networks (see van der Hofstad~\cite[Chapter 7]{RvdH1} for an overview). 
For random walk on the static configuration model, with or without backtracking, the asymptotics 
of the associated mixing time, and related properties such as the presence of the so-called 
cutoff phenomenon, were derived recently by Berestycki, Lubetzky, Peres and Sly~\cite{BLPS}, 
and by Ben-Hamou and Salez~\cite{B-HS}. In particular, under mild assumptions on the 
degree sequence, guaranteeing that the graph is an \emph{expander} with high probability, 
the mixing time was shown to be of order $\log n$, with $n$ the number of vertices. 

In the present paper we consider a \emph{discrete-time dynamic} version of the configuration 
model, where at each unit of time a fraction $\alpha_n$ of the edges is sampled and rewired 
uniformly at random. [A different dynamic version of the configuration model was 
considered in the context of \emph{graph sampling}. See Greenhill~\cite{G} and references 
therein.] Our dynamics \emph{preserves the degrees} of the vertices. Consequently, when 
considering a random walk on this dynamic configuration model, its \emph{stationary distribution 
remains constant over time} and the analysis of its mixing time is a well-posed question. It 
is natural to expect that, due to the graph dynamics, the random walk \emph{mixes 
faster} than the $\log n$ order known for the static model. In our main theorem we will 
make this precise under mild assumptions on the prescribed degree sequence stated in 
Condition~\ref{cond-degree-reg} and Remark~\ref{conditions} below. By requiring that 
$\lim_{n\to\infty} \alpha_n(\log n)^2=\infty$, which corresponds to a regime of fast enough 
graph dynamics, we find in Theorem~\ref{thm:mainthm} below that for every $\varepsilon
\in(0,1)$ the $\varepsilon$-mixing time for random walk \emph{without backtracking} grows like 
$\sqrt{2\log(1/\varepsilon)/\log(1/(1-\alpha_n))}$ as $n\to\infty$, with high probability in the 
sense of Definition~\ref{def:whp} below. Note that this mixing time is $o(\log n)$, so that 
the dynamics indeed speeds up the mixing. 


\subsection{Model}
We start by defining the model and setting up the notation. The set of vertices is denoted by 
$V$ and the degree of a vertex $v\in V$ by $d(v)$. Each vertex $v\in V$ is thought of as being 
incident to $d(v)$ \emph{half-edges} (see Fig.~\ref{fig:halfedge}). We write $H$ for the set of 
half-edges, and assume that each half-edge is associated to a vertex via incidence. We denote 
by $v(x)\in V$ the vertex to which $x\in H$ is incident and by $H(v) \coloneqq \{x\in H\colon\,v(x)=v \}
\subset H$ the set of half-edges incident to $v\in V$. If $x,y\in H(v)$ with $x\neq y$, then we 
write $x\sim y$ and say that $x$ and $y$ are siblings of each other. The degree of a half-edge 
$x\in H$ is defined as
\begin{equation}
\label{degdef}
\deg(x) \coloneqq d(v(x))-1.
\end{equation}
We consider graphs on $n$ vertices, i.e., $|V| = n$, with $m$ edges, so that $|H|=\sum_{v\in V} 
\deg(v) = 2m \eqqcolon \ell$.

\begin{figure}[htbp]
\centering
\vspace{0.5cm}
\includegraphics[width=0.25\textwidth]{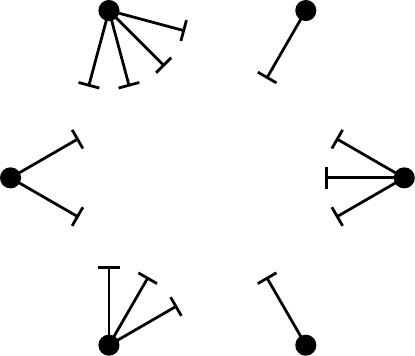}
\vspace{0.5cm}
\caption{\small Vertices with half-edges.}
\label{fig:halfedge}
\end{figure}

The \emph{edges} of the graph will be given by a \emph{configuration} that is a \emph{pairing of 
half-edges}. We denote by $\eta(x)$ the half-edge paired to $x\in H$ in the configuration $\eta$.
A configuration $\eta$ will be viewed as a bijection of $H$ without fixed points and with the property 
that $\eta(\eta(x)) = x$ for all $x\in H$ (also called an involution). With a slight abuse of notation, we 
will use the same symbol $\eta$ to denote the set of pairs of half-edges in $\eta$, so $\{x,y\}\in\eta$ 
means that $\eta(x) = y$ and $\eta(y) = x$. Each pair of half-edges in $\eta$ will also be called an 
edge. The set of all configurations on $H$ will be denoted by $\Conf_H$.

We note that each configuration gives rise to a graph that may contain self-loops (edges having 
the same vertex on both ends) or multiple edges (between the same pair of vertices). On the 
other hand, a graph can be obtained via several distinct configurations.

We will consider asymptotic statements in the sense of $|V| = n \to \infty$. Thus, quantities like
$V, H, d, \deg$ and $\ell$ all depend on $n$. In order to lighten the notation, we often suppress 
$n$ from the notation.


\subsubsection{Configuration model}
\label{SCM}

We recall the definition of the configuration model, phrased in our notation. Inspired by Bender 
and Canfield~\cite{BenCan78}, the configuration model was introduced by Bollob\'as~\cite{Boll80b} 
to study the number of regular graphs of a given size (see also Bollob\'as~\cite{Boll01}). Molloy and 
Reed~\cite{MolRee95},~\cite{MolRee98} introduced the configuration model with general prescribed 
degrees.

The configuration model on $V$ with degree sequence $(d(v))_{v\in V}$ is the uniform 
distribution on $\Conf_H$. We sometimes write $d_n=(d(v))_{v\in V}$ when we wish to 
stress the $n$-dependence of the degree sequence. Identify $H$ with the set 
$$
[1,\ell] \coloneqq \{1,\dots,\ell\}.
$$ 
A sample $\eta$ from the configuration model can be generated by the following 
\emph{sampling algorithm}:

\begin{itemize}
\item[1.] 
Initialize $U = H, \eta = \varnothing$, where $U$ denotes the set of unpaired half-edges.
\item[2.] Pick a half-edge, say $x$, uniformly at random from $U\setminus\{\min U\}$. 
\label{item:cmgen_two}
\item[3.]
Update $\eta \to \eta \cup \{\{x,\min U\}\}$ and $U \to U\setminus\{x,\min U\}$.
\item[4.]
If $U\neq\varnothing$, then continue from step 2. Else return $\eta$.
\end{itemize}
The resulting configuration $\eta$ gives rise to a graph on $V$ with degree sequence 
$(d(v))_{v\in V}$.

\begin{remark}
\label{multigraph}
{\rm Note that in the above algorithm two half-edges that belong to the same vertex can be 
paired, which creates a self-loop, or two half-edges that belong to vertices that already 
have an edge between them can be paired, which creates multiple edges. However, if 
the degrees are not too large (as in Condition~\ref{cond-degree-reg} below), then as 
$n\to\infty$ the number of self-loops and the number of multiple edges converge to two 
independent Poisson random variables (see Janson~\cite{Jans06b},~\cite{Jans13a}, 
Angel, van der Hofstad and Holmgren~\cite{AngHofHol16}). Consequently, convergence 
in probability for the configuration model implies convergence in probability for the configuration 
model conditioned on being simple.}
\end{remark}

Let $U_n$ be uniformly distributed on $[1,n]$. Then
\begin{equation}
\label{degree}
D_n = d(U_n)
\end{equation} 
is the degree of a random vertex on the graph of size $n$.  Write $\mathbb{P}_n$ to 
denote the law of $D_n$. Throughout the sequel, we impose the following mild regularity 
conditions on the degree sequence:
\begin{cond}
\label{cond-degree-reg} {\bf (Regularity of degrees)}
\begin{itemize}
\item[{\rm (R1)}]
Let $\ell = |H|$. Then $\ell$ is even and of order $n$, i.e., $\ell=\Theta(n)$ as $n\to\infty$.
\item [{\rm (R2)}]
Let 
\begin{equation}
\nu_n \coloneqq \frac{\sum_{z\in H}\deg(z)}{\ell}=\frac{\sum_{v\in V} d(v)[d(v)-1]}{\sum_{v\in V} d(v)}
= \frac{\mathbb{E}_n(D_n(D_n-1))}{\mathbb{E}_n(D_n)}
\end{equation} 
denote the expected degree of a uniformly chosen half-edge. Then $\limsup_{n\to\infty} \nu_n < \infty$.
\item [{\rm (R3)}] 
$\mathbb{P}_n(D_n \geq 2) = 1$ for all $n\in\mathbb{N}$.
\end{itemize}
\end{cond}

\begin{remark}
\label{conditions}
{\rm Conditions (R1) and (R2) are minimal requirements to guarantee that the graph is locally tree-like
(in the sense of Lemma~\ref{lem:probtreeball} below). They also ensure that the probability of the graph 
being simple has a strictly positive limit. Conditioned on being simple, the configuration model generates 
a random graph that is uniformly distributed among all the simple graphs with the given degree sequence 
(see van der Hofstad~\cite[Chapter 7]{RvdH1},~\cite[Chapters 3 and 6]{RvdH2}). Condition (R3) ensures 
that the random walk without backtracking is well-defined because it cannot get stuck on a dead-end.}
\end{remark}


\subsubsection{Dynamic configuration model}
\label{DCM}

We begin by describing the random graph process. It is convenient to take as the state space the set of 
configurations $\Conf_H$. For a fixed initial configuration $\eta$ and fixed $2 \leq k \leq m = \ell/2$, 
the graph evolves as follows (see Fig.~\ref{fig:dcm}):
\begin{enumerate}
\item
At each time $t \in \N$, pick $k$ edges (pairs of half-edges) from $C_{t-1}$ uniformly at random without 
replacement. Cut these edges to get $2k$ half-edges and denote this set of half-edges by $R_t$.
\item 
Generate a uniform pairing of these half-edges to obtain $k$ new edges. Replace the $k$ edges chosen 
in step 1 by the $k$ new edges to get the configuration $C_t$ at time $t$.
\end{enumerate}
This process rewires $k$ edges at each step by applying the configuration model sampling algorithm 
in Section~\ref{SCM} restriced to $k$ uniformly chosen edges. Since half-edges are not created or 
destroyed, the degree sequence of the graph given by $C_t$ is the same for all $t\in\N_0$. This gives 
us a Markov chain on the set of configurations $\Conf_H$. For $\eta,\zeta\in\Conf_H$, the \emph{transition 
probabilities} for this Markov chain are given by
\begin{align}
\label{DC}
Q(\eta,\zeta) = Q(\zeta,\eta) \coloneqq
\begin{cases}
\frac{1}{(2k-1)!!}\frac{\binom{m - d_\text{Ham}(\eta,\zeta)}{k - d_\text{Ham}(\eta,\zeta)}}{\binom{m}{k}} 
& \text{if }d_\text{Ham}(\eta,\zeta)\leq k, \\
0 
& \text{otherwise},
\end{cases}
\end{align}
where $d_\text{Ham}(\eta,\zeta) \coloneqq |\eta\setminus\zeta| = |\zeta\setminus\eta|$ is the Hamming 
distance between configurations $\eta$ and $\zeta$, which is the number of edges that appear in $\eta$ 
but not in $\zeta$. The factor $1/(2k-1)!!$ comes from the uniform pairing of the half-edges, while the 
factor $\binom{m - d_\text{Ham}(\eta,\zeta)}{k - d_\text{Ham}(\eta,\zeta)}/\binom{m}{k}$ comes 
from choosing uniformly at random a set of $k$ edges in $\eta$ that contains the edges in 
$\eta\setminus\zeta$. It is easy to see that this Markov chain is irreducible and aperiodic, with
stationary distribution the uniform distribution on $\Conf_H$, denoted by $\text{Conf}_H$, which 
is the distribution of the configuration model.

\begin{figure}[htbp]
\centering
\vspace{0.5cm}
\includegraphics[width=0.5\textwidth]{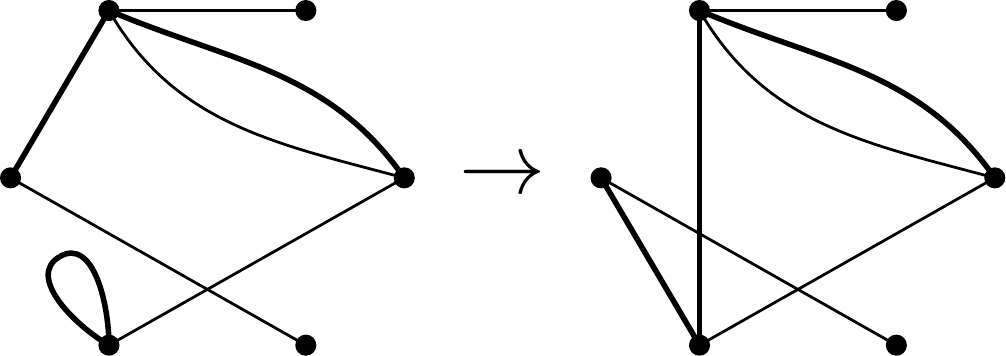}
\vspace{0.5cm}
\caption{\small One move of the dynamic configuration model. Bold edges on the left are the ones 
chosen to be rewired. Bold edges on the right are the newly formed edges.}
\label{fig:dcm}
\end{figure}


\subsubsection{Random walk without backtracking}
\label{NBTRW}

On top of the random graph process we define the random walk without backtracking, i.e., 
the walk cannot traverse the same edge twice in a row. As in Ben-Hamou and Salez~\cite{B-HS}, 
we define it as a random walk on the set of half-edges $H$, which is more convenient in the 
dynamic setting because the edges change over time while the half-edges do not.  For a fixed 
configuration $\eta$ and half-edges $x,y\in H$, the transition probabilities of the random walk 
are given by (recall \eqref{degdef})
\begin{align}
\label{NBT}
P_\eta(x,y) \coloneqq
\begin{cases}
\frac{1}{\deg(y)} & \text{if }\eta(x)\sim y\text{ and }\eta(x)\neq y, \\
0 & \text{otherwise}.
\end{cases}
\end{align}
When the random walk is at half-edge $x$ in configuration $\eta$, it jumps to one of the siblings 
of the half-edge it is paired to uniformly at random (see Fig.~\ref{fig:nbrw_move}). The transition 
probabilities are symmetric with respect to the pairing given by $\eta$, i.e., $P_\eta(x,y) 
= P_\eta(\eta(y),\eta(x))$, in particular, they are doubly stochastic, and so the uniform distribution 
on $H$, denoted by $U_H$, is stationary for $P_\eta$ for any $\eta\in\Conf_H$.

\begin{figure}[htbp]
\centering
\includegraphics[width=0.25\textwidth]{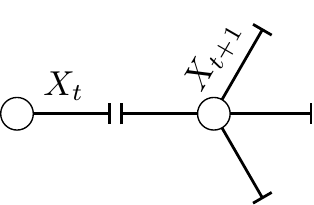}
\caption{\small The random walk moves from half-edge $X_t$ to half-edge $X_{t+1}$, one of 
the siblings of the half-edge that $X_t$ is paired to.}
\label{fig:nbrw_move}
\end{figure}

\subsubsection{Random walk on dynamic configuration model}

The random walk without backtracking on the dynamic configuration model is the joint Markov 
chain $(M_t)_{t\in\N_0} = (C_t,X_t)_{t\in\N_0}$ in which $(C_t)_{t\in\N_0}$ is the Markov chain 
on the set of configurations $\Conf_H$ as described in~\eqref{DC}, and $(X_t)_{t\in\N_0}$ 
is the random walk that at each time step $t$ jumps according to the transition probabilities 
$P_{C_t}(\cdot,\cdot)$ as in~\eqref{NBT}. 

Formally, for initial configuration $\eta$ and half-edge $x$, the one-step evolution of the joint 
Markov chain is given by the conditional probabilities
\begin{align}
\prob_{\eta,x}\big(C_t = \zeta, X_t = z \mid C_{t-1} = \xi, X_{t-1} = y\big) 
= Q(\xi,\zeta)\,P_\zeta(y,z),
\qquad t \in \N,
\end{align}
with
\begin{align}
\prob_{\eta,x}(C_0 = \eta,X_0=x) = 1.
\end{align}
It is easy to see that if $d(v)>1$ for all $v\in V$, then this Markov chain is irreducible and aperiodic, and 
has the unique stationary distribution $\text{Conf}_H \times U_H$. 

While the graph process $(C_t)_{t\in\N_0}$ and the joint process $(M_t)_{t\in\N_0}$ are Markovian, 
the random walk $(X_t)_{t\in\N_0}$ is not. However, $U_H$ is still the stationary distribution of 
$(X_t)_{t\in\N_0}$. Indeed, for any $\eta\in\Conf_H$ and $y \in H$ we have
\begin{equation}
\sum_{x\in H}U_H(x)\,\prob_{\eta,x}(X_t = y) 
= \sum_{x\in H} \frac{1}{\ell}\,\prob_{\eta,x}(X_t = y) = \frac{1}{\ell} = U_H(y).
\end{equation}
The next to last equality uses that $\sum_{x\in H}\prob_{\eta,x}(X_t = y)=1$ for every $y\in H$,
which can be seen by conditioning on the graph process and using that the space-time 
inhomogeneous random walk has a doubly stochastic transition matrix (recall the remarks 
made below \eqref{NBT}).


\subsection{Main theorem}
\label{statement}

We are interested in the behaviour of the total variation distance between the distribution of 
$X_t$ and the uniform distribution
\begin{equation}
\mathcal{D}_{\eta,x}(t) \coloneqq \|\prob_{\eta,x}(X_t\in\cdot\,)-U_H(\cdot)\|_{{\sss \mathrm{TV}}}.
\end{equation}
[We recall that the total variation distance of two probability measures $\mu_1,\mu_2$ on a finite 
state space $S$ is given by the following equivalent expressions:
\begin{equation}
\|\mu_1-\mu_2\|_{{\sss \mathrm{TV}}} \coloneqq \sum_{x\in S}|\mu_1(x)-\mu_2(x)| 
= \sum_{x\in S}[\mu_1(x)-\mu_2(x)]_+ = \sup_{A\subseteq S}[\mu_1(A)-\mu_2(A)],
\end{equation}
where $[a]_+ \coloneqq \max\{a,0\}$ for $a\in\R$.] Since $(X_t)_{t\in\N_0}$ is not Markovian, it is 
not clear whether $t\mapsto \mathcal{D}_{\eta,x}(t)$ is decreasing or not. On the other hand, 
\begin{equation}
\mathcal{D}_{\eta,x}(t) \leq \|\prob_{\eta,x}(M_t\in\cdot\,)-(U_H\times\text{Conf}_H)(\cdot)\|_{{\sss \mathrm{TV}}},
\end{equation}
and since the right-hand side converges to 0 as $t\to\infty$, so does $\mathcal{D}_{\eta,x}(t)$. 
Therefore the following definition is well-posed:

\begin{definition}[{\bf Mixing time of the random walk}]
\label{defMix}
For $\varepsilon\in(0,1)$, the {\em $\varepsilon$-mixing time of the random walk} is defined as
\begin{equation}
\label{mixing}
\tmix^n(\varepsilon; \eta, x) \coloneqq 
\inf\big\{t \in \N_0\colon\,\mathcal{D}_{\eta,x}(t) \leq \varepsilon\big\}.
\end{equation}
\end{definition}

Note that $\tmix^n(\varepsilon; \eta, x)$ depends on the initial configuration $\eta$ and half-edge $x$. 
We will prove statements that hold for \emph{typical} choices of $(\eta,x)$ under the uniform 
distribution $\mu_n$ (recall that $H$ depends on the number of vertices $n$) given by
\begin{equation}
\label{mu}
\mu_n := \text{Conf}_{H} \times U_H \quad \text{ on  }  \Conf_H \times H,
\end{equation} 
where {\em typical} is made precise through the following definition:

\begin{definition}[{\bf With high probability}]
\label{def:whp}
A statement that depends on the initial configuration $\eta$ and half-edge $x$ is said to hold 
{\em with high probability (whp)} in $\eta$ and $x$ if the $\mu_n$-measure of the set of pairs $(\eta,x)$ 
for which the statement holds tends to $1$ as $n\to\infty$. 
\end{definition}

\noindent
Below we sometimes write whp with respect to some probability measure other than $\mu_n$, 
but it will always be clear from the context which probability measure we are referring to.

Throughout the paper we assume the following condition on 
\begin{equation}
\label{ratio}
\alpha_n:=k/m, \qquad n \in \N,
\end{equation} 
denoting {\em the proportion of edges involved in the rewiring} at each time step of the graph 
dynamics defined in Section~\ref{DCM}:

\begin{cond}[{\bf Fast graph dynamics}]
\label{fast}
The ratio $\alpha_n$ in \eqref{ratio} is subject to the constraint 
\begin{equation}
\lim_{n\to\infty} \alpha_n (\log n)^2 = \infty.
\end{equation}
\end{cond}

We can now state our main result.

\begin{theorem}[{\bf Sharp mixing time asymptotics}]
\label{thm:mainthm}
Suppose that Conditions~{\rm \ref{cond-degree-reg}} and~{\rm \ref{fast}} hold. Then, for every
$\varepsilon > 0$, whp in $\eta$ and $x$, 
\begin{equation}
\label{mix}
\tmix^n(\varepsilon; \eta,x) = [1+o(1)]\,\sqrt{\frac{2\log(1/\varepsilon)}{\log(1/(1-\alpha_n))}}.
\end{equation}
\end{theorem}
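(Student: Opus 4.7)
The plan is to use the randomised stopping time strategy outlined in the abstract. Define $T$ to be the first $t \geq 1$ at which the walk traverses an edge absent from the initial configuration $\eta$; equivalently, the first step for which $\{X_{t-1}, C_{t-1}(X_{t-1})\}$ is not a pair in $\eta$ but was created by an earlier rewiring. The key intuition is that the partner half-edge $C_{t-1}(X_{t-1})$ at time $T$, having been chosen by a uniform pairing inside the dynamic rewiring step that created it, is approximately uniformly distributed on $H$, and consequently so is $X_T$. This makes $T$ close to a strong stationary time, giving
\begin{equation}
\mathcal{D}_{\eta,x}(t) \leq \prob_{\eta,x}(T > t) + o(1).
\end{equation}

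For the upper bound on $\tmix^n$ it then suffices to estimate the tail of $T$. Given that the walk has so far traversed only ``fresh'' edges, the conditional probability that the current edge has been touched by at least one of the $s-1$ previous rewiring steps is approximately $1-(1-\alpha_n)^{s-1}$, since each step independently hits a given edge with probability $\alpha_n$. Hence
\begin{equation}
\prob_{\eta,x}(T > t) \approx \prod_{s=1}^{t}(1-\alpha_n)^{s-1} = (1-\alpha_n)^{t(t-1)/2}.
\end{equation}
Equating this with $\varepsilon$ gives $t = [1+o(1)]\sqrt{2\log(1/\varepsilon)/\log(1/(1-\alpha_n))}$, matching the claimed asymptotic.

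For the matching lower bound I would exploit the locally tree-like nature of the graph under Condition~\ref{cond-degree-reg}. Before time $T$ the walk has only traversed original edges and, being non-backtracking, has visited at most $t+1$ distinct half-edges, all lying in the ball $B_t(v(x))$ of radius $t$ in the initial graph $\eta$. Since $\nu = \limsup_n \nu_n < \infty$, this ball contains at most $\nu^{t+O(1)}$ half-edges whp, and Condition~\ref{fast} forces $t = O(\alpha_n^{-1/2}) = o(\log n)$, so $|B_t(v(x))|/\ell = o(1)$. Therefore on $\{T > t\}$ the walk lies in a set of vanishing $U_H$-measure, giving $\mathcal{D}_{\eta,x}(t) \geq \prob_{\eta,x}(T > t) - o(1)$; choosing $t$ slightly below the threshold above reverses the required inequality.

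The most delicate step will be rigorously justifying that $T$ is an approximate strong stationary time. The subtlety is that $C_{T-1}(X_{T-1})$ is not literally uniform on $H$: it was chosen uniformly among the half-edges available at the (random) rewiring step creating it, and later rewirings may have involved its neighbourhood. The natural remedy is a coupling that resamples the identity of the rewired partner afresh the first time the walk visits it, paired with a combinatorial estimate showing that whp the length-$o(\log n)$ trajectory of the walk is disjoint from the half-edges directly manipulated by past rewiring steps, except for the one triggering $T$. A secondary but related obstacle is justifying the product form for $\prob_{\eta,x}(T > t)$: one needs to control dependencies between the ``this edge was rewired'' events across the $t$ steps, and here Condition~\ref{fast} is crucial as it keeps $t$ in the regime $\alpha_n t = o(1)$, where the per-edge rewiring events are well approximated by independent Bernoullis.
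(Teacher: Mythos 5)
Your proposal follows essentially the same strategy as the paper: define a randomised stopping time $\tau$ as the first time the walk traverses a rewired edge, argue that $\tau$ behaves like a strong stationary time, compute the tail $\prob_{\eta,x}(\tau > t)$, and use the locally tree-like structure to get the matching lower bound. A few remarks on where the proposal is rough or incomplete.

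First, a small bookkeeping issue: by the time the walker takes its $s$-th step it has seen $s$ rewiring rounds, not $s-1$ (the dynamics rewires at time $s$ \emph{before} the walk moves using $C_s$). Summing over the path gives the exponent $t(t+1)/2$ rather than your $t(t-1)/2$. This is immaterial for the leading-order asymptotics since $t\alpha_n=O(\alpha_n^{1/2})=o(1)$, but it is worth noting that the clean product form $\prod_{i=1}^t \binom{m-i}{k}/\binom{m}{k}$ requires that the segment of the trajectory not yet passed still sits on distinct edges, which in turn uses that the path is self-avoiding; this is where the tree-like structure of $B_t^\eta(x)$ enters already in the tail computation, not only in the lower bound.

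Second, and more significantly, the step you correctly flag as ``most delicate'' — showing that conditionally on $\{\tau\le t\}$ the walker is $o(1)$-close to uniform — is where nearly all the paper's work goes, and your proposed fix is a sketch that conceals most of the difficulty. The issue is not merely that the partner chosen by the rewiring need be resampled; it is that one has to control \emph{all} ways in which the trajectory can cross rewired edges (possibly several times), and show that for almost every target half-edge $y$ one has $\prob_{\eta,x}(X_t=y,\,\tau\le t)\ge (1-o(1))\ell^{-1}\,\prob_{\eta,x}(\tau\le t)$. The paper does this by decomposing over \emph{segmented paths} (paths that are non-backtracking within each segment between consecutive rewired crossings), proving an exchangeability lemma that isomorphic segmented paths have identical probability (via an explicit coupling of two copies of the dynamic graph), deriving a pathwise lower bound given the rewiring history, and then — using Lemmas on tree-like neighborhoods and good tuples — summing over enough self-avoiding segmented paths to saturate the $\ell^{-1}$ mass. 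This combinatorial control, and in particular the fact that for a ``good'' tuple the segments live in disjoint trees so that the inverse-degree products telescope to $1$ and $1/\ell$, is the engine that makes the stopping time genuinely close to a strong stationary time; without it your ``resample on first visit'' coupling is a hope rather than a proof. In short, your outline is aligned with the paper's, but the content of Sections~\ref{3}--\ref{mainProof} is precisely the justification of the sentence you label as delicate, and it is substantial.
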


\noindent
Note that Condition~\ref{fast} allows for $\lim_{n\to\infty} \alpha_n =0$. In that case \eqref{mix} 
simplifies to
\begin{equation}
\tmix^n(\varepsilon; \eta,x) = [1+o(1)]\,\sqrt{\frac{2\log(1/\varepsilon)}{\alpha_n}}.
\end{equation} 


\subsection{Discussion}

{\bf 1.}
Theorem~\ref{thm:mainthm} gives the sharp asymptotics of the mixing time in the regime 
where the dynamics is fast enough (as specified by Condition~\ref{fast}). Note that if 
$\lim_{n\to\infty} \alpha_n=\alpha \in (0,1]$, then $\tmix^n(\varepsilon; \eta, x)$ is of 
order one: at every step the random walk has a non-vanishing probability to traverse a 
rewired edge, and so it is qualitatively similar to a random walk on a complete graph. On 
the other hand, when $\lim_{n\to\infty} \alpha_n=0$ the mixing time is of order $1/\sqrt{\alpha_n} 
= o(\log n)$, which shows that the dynamics still \emph{speeds up} the mixing. The regime 
$\alpha_n = \Theta(1/(\log n)^2)$, which is not captured by Theorem~\ref{thm:mainthm}, 
corresponds to $1/\sqrt{\alpha_n} = \Theta(\log n)$, and we expect the mixing time to be 
\emph{comparable} to that of the static configuration model. In the regime $\alpha_n 
= o(1/(\log n)^2)$ we expect the mixing time to be the \emph{same} as that of the static 
configuration model. In a future paper we plan to provide a comparative analysis of the 
three regimes.
 
\medskip\noindent 
{\bf 2.}
In the static model the $\varepsilon$-mixing time is known to scale like $[1+o(1)]\,c\log n$
for some $c \in (0,\infty)$ that is independent of $\varepsilon \in (0,1)$ (Ben-Hamou and 
Salez~\cite{B-HS}). Consequently, there is \emph{cutoff}, i.e., the total variation distance 
drops from $1$ to $0$ in a time window of width $o(\log n)$. In contrast, in the regime 
of fast graph dynamics there is \emph{no cutoff}, i.e., the total variation distance drops 
from $1$ to $0$ gradually on scale $1/\sqrt{\alpha_n}$. 
 
\medskip\noindent
{\bf 3.}
Our proof is robust and can be easily extended to variants of our model where, for example, 
$(k_n)_{n\in\N}$ is random with $k_n$ having a first moment that tends to infinity as $n\to\infty$, 
or where time is continuous and pairs of edges are randomly rewired at rate $\alpha_n$.

\medskip\noindent
{\bf 4.} 
Theorem~\ref{thm:mainthm} can be compared to the analogous result for the static 
configuration model only when $\mathbb{P}_n(D_n \geq 3) = 1$ for all $n\in\N$. In fact, 
only under the latter condition does the probability of having a connected graph tend 
to one (see Luczak~\cite{Lucz92}, Federico and van der Hofstad~\cite{FedHof16}). 
If (R3) holds, then on the dynamic graph the walk mixes on the whole of $H$, while 
on the static graph it mixes on the subset of $H$ corresponding to the giant component.

\medskip\noindent
{\bf 5.}
We are not able to characterise the mixing time of the joint process of dynamic random 
graph and random walk. Clearly, the mixing time of the joint process is at least as large 
as the mixing time of each process separately. While the graph process helps the random 
walk to mix, the converse is not true because the graph process does not depend on the 
random walk. Observe that once the graph process has mixed it has an almost uniform 
configuration, and the random walk ought to have mixed already. This observation suggests 
that if the mixing times of the graph process and the random walk are not of the same 
order, then the mixing time of the joint process will have the same order as the mixing 
time of the graph process. Intuitively, we may expect that the mixing time of the graph 
corresponds to the time at which all edges are rewired at least once, which should be of 
order $(n/k)\log{n}=(1/\alpha_n)\log{n}$ by a coupon collector argument. In our setting 
the latter is much larger than $1/\sqrt{\alpha_n}$.

\medskip\noindent
{\bf 6.}
We emphasize that we look at the mixing times for `typical' initial conditions and we look 
at the distribution of the random walk averaged over the trajectories of the graph process:
the `annealed' model. It would be interesting to look at different setups, such as `worst-case' 
mixing, in which the maximum of the mixing times over all initial conditions is considered, 
or the `quenched' model, in which the entire trajectory of the graph process is fixed instead 
of just the initial configuration. In such setups the results can be drastically different. For 
example, if we consider the quenched model for $d$-regular graphs, then we see that for 
any time $t$ and any fixed realization of configurations up to time $t$, the walk without 
backtracking can reach at most $(d-1)^t$ half-edges. This gives us a lower bound of order 
$\log n$ for the mixing time in the quenched model, which contrasts with the $o(\log n)$ 
mixing time in our setup.

\medskip\noindent
{\bf 7.} 
It would be of interest to extend our results to random walk with backtracking, which is harder. 
Indeed, because the configuration model is locally tree-like and random walk without backtracking 
on a tree is the same as self-avoiding walk, in our proof we can exploit the fact that typical 
walk trajectories are self-avoiding. In contrast, for the random walk with backtracking, after it 
jumps over a rewired edge, which in our model serves as a randomized stopping time, it may 
jump back over the same edge, in which case it has not mixed. This problem remains to be 
resolved.


\subsection{Outline}

The remainder of this paper is organised as follows. Section~\ref{2} gives the main idea 
behind the proof, namely, we introduce a randomised stopping time $\tau=\tau_n$, the first 
time the walk moves along an edge that was rewired before, and we state a key proposition, 
Proposition~\ref{prop:mainprop} below, which says that this time is close to a strong stationary 
time and characterises its tail distribution. As shown at the end of Section~\ref{2}, 
Theorem~\ref{thm:mainthm} follows from Proposition~\ref{prop:mainprop}, whose proof 
consists of three main steps. The first step in Section~\ref{3} consists of a careful combinatorial 
analysis of the distribution of the walk given the history of the rewiring of the half-edges 
in the underlying evolving graph. The second step in Section~\ref{treelike} uses a classical 
exploration procedure of the static random graph from a uniform vertex to unveil the locally 
tree-like structure in large enough balls. The third step in Section~\ref{mainProof} settles 
the closeness to stationarity and provides control on the tail of the randomized stopping time
 $\tau$.


\section{Stopping time decomposition}
\label{2}

We employ a \emph{randomised stopping time} argument to get bounds on the total variation 
distance. We define the randomised stopping time $\tau=\tau_n$ to be the first time the walker 
makes a move through an edge that was rewired before. Recall from Section~\ref{DCM} that 
$R_t$ is the set of half-edges involved in the rewiring at time step $t$. Letting $R_{\leq t} = 
\cup_{s=1}^t R_s$, we set
\begin{equation}
\tau \coloneqq \min\{t\in\N\colon\, X_{t-1}\in R_{\leq t}\}.
\end{equation}
As we will see later, $\tau$ behaves like a strong stationary time. We obtain our main result by 
deriving bounds on $\mathcal{D}_{\eta,x}(t)$ in terms of conditional distributions of the random walk 
involving $\tau$ and in terms of tail probabilities of $\tau$. In particular, by the triangle inequality, for any 
$t\in\N_0$, $\eta\in\Conf_H$ and $x \in H$,
\begin{align}
\label{equ:tvuppertriangle}
\mathcal{D}_{\eta,x}(t) 
&\leq \prob_{\eta,x}(\tau > t)\,\|\prob_{\eta,x}(X_t\in\cdot\mid \tau >t) - U_H(\cdot)\|_{{\sss \mathrm{TV}}} \nn \\
&\qquad+ \prob_{\eta,x}(\tau \leq t)\,\|\prob_{\eta,x}(X_t\in\cdot\mid \tau \leq t) - U_H(\cdot)\|_{{\sss \mathrm{TV}}}
\end{align}
and
\begin{align}
\label{equ:tvlowertriangle}
\mathcal{D}_{\eta,x}(t) 
&\geq \prob_{\eta,x}(\tau > t)\,\|\prob_{\eta,x}(X_t\in\cdot\mid \tau >t) - U_H(\cdot)\|_{{\sss \mathrm{TV}}} \nn \\
&\qquad- \prob_{\eta,x}(\tau \leq t)\,\|\prob_{\eta,x}(X_t\in\cdot\mid \tau \leq t) - U_H(\cdot)\|_{{\sss \mathrm{TV}}}.
\end{align}
With these in hand, we only need to find bounds for $\prob_{\eta,x}(\tau > t)$, $\|\prob_{\eta,x}
(X_t\in\cdot\mid \tau >t) - U_H(\cdot)\|_{{\sss \mathrm{TV}}}$ and $\|\prob_{\eta,x}(X_t\in\cdot\mid \tau \leq t) 
- U_H(\cdot)\|_{{\sss \mathrm{TV}}}$.

The key result for the proof of our main theorem is the following proposition:

\begin{proposition}[\bf{Closeness to stationarity and tail behavior of stopping time}]
\label{prop:mainprop}
$\mbox{}$\\
Suppose that Conditions~{\rm \ref{cond-degree-reg}} and~{\rm \ref{fast}} hold. For $t = t(n)$ $ = o(\log n)$, 
whp in $x$ and $\eta$,
\begin{align}
&\|\prob_{\eta,x}(X_t\in\cdot\mid \tau\leq t) - U_H(\cdot)\|_{{\sss \mathrm{TV}}} = o(1), \label{equ:stoppedtv} \\
&\|\prob_{\eta,x}(X_t\in\cdot\mid \tau > t) - U_H(\cdot)\|_{{\sss \mathrm{TV}}} = 1-o(1), \label{equ:unstoppedtv} \\
&\prob_{\eta,x}(\tau > t) = (1-\alpha_n)^{t(t+1)/2}+o(1). \label{equ:tautailprob}
\end{align}
\end{proposition}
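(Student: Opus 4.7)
My plan is to establish the three claims of Proposition~\ref{prop:mainprop} by expressing the law of $X_t$ conditionally on the history of rewirings, exploiting the locally tree-like structure of the configuration model to control the walker's position before $\tau$, and using the uniform nature of the rewiring step to argue that the walker is essentially uniformly distributed once it has traversed a rewired edge. I would prove \eqref{equ:tautailprob} and \eqref{equ:unstoppedtv} first, since they only require the ``before $\tau$'' picture, and then tackle the more delicate \eqref{equ:stoppedtv}.

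For \eqref{equ:tautailprob}, the event $\{\tau > t\}$ is equivalent to requiring, for every $s \in \{1,\dots,t\}$, that the rewiring $R_s$ avoids the half-edges $X_{s-1}, X_s, \dots, X_{t-1}$ that the walker is going to occupy. Crucially, on $\{\tau > t\}$ the partner of $X_s$ in every $C_{s'}$ with $s' \geq s$ coincides with the initial partner $C_0(X_s)$, so $(X_0,\dots,X_{t-1})$ is simply a non-backtracking walk on the static configuration $C_0$. By the tree-like analysis of Section~\ref{treelike}, for $t = o(\log n)$ this trajectory visits $t$ distinct edges of $C_0$ whp, and each rewiring $R_{s'}$ is a uniform random $k$-subset of the $m$ edges. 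Conditioning on the trajectory, the probability that $R_{s'}$ avoids the $t-s'+1$ relevant edges is $\binom{m-(t-s'+1)}{k}/\binom{m}{k} = (1-\alpha_n)^{t-s'+1}[1+o(1)]$, and multiplying over $s' \in \{1,\dots,t\}$ yields $(1-\alpha_n)^{\sum_{j=1}^{t} j} = (1-\alpha_n)^{t(t+1)/2}$, up to an $o(1)$ additive error coming from the small probability of non-distinct trajectories.

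For \eqref{equ:unstoppedtv}, conditional on $\{\tau > t\}$ the walker is confined to the radius-$t$ ball $B_t(X_0)$ around $X_0$ in the original configuration $C_0$. By Condition~(R2) and the tree-like estimates of Section~\ref{treelike}, whp $|B_t(X_0)| = O(\nu_n^t) = n^{o(1)}$ for $t = o(\log n)$, so the conditional law is supported on a set of density $o(1)$ under $U_H$; this immediately gives $\|\prob_{\eta,x}(X_t\in\cdot\mid \tau > t) - U_H\|_{\TV} \geq 1 - o(1)$.

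The main obstacle is \eqref{equ:stoppedtv}. At time $\tau$ the edge incident to $X_{\tau-1}$ is one of the $k$ edges re-paired at step $\tau$, and hence $X_{\tau-1}$ gets a new partner $Y$ drawn uniformly from the $2k-1$ other half-edges in that rewiring, with $X_\tau$ then a uniform sibling of $Y$. Because the set of $k$ edges being rewired at time $\tau$ is, conditionally on containing the edge of $X_{\tau-1}$, a uniform $(k-1)$-subset of the remaining $m-1$ edges, a direct computation exploiting the symmetry of the configuration model shows that the marginal law of $Y$ is $o(1)$-close in $\TV$ to $U_H$, and the sibling map preserves this near-uniformity. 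Since $U_H$ is stationary for $P_\eta$ for every $\eta \in \Conf_H$, the remaining $t - \tau \leq t = o(\log n)$ walk steps preserve near-uniformity as well. The delicate technical points are (i) handling the bias that conditioning on $\{\tau \leq t\}$ induces on the joint law of the trajectory and the rewiring history, and (ii) keeping track of the propagation of the $o(1)$ error along the additional $t-\tau$ walk steps; both should be addressable by combining the combinatorial decomposition of Section~\ref{3} with the tree-likeness estimates of Section~\ref{treelike}.
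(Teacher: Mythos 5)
Your arguments for \eqref{equ:tautailprob} and \eqref{equ:unstoppedtv} match the paper's approach. For \eqref{equ:tautailprob}, the rearrangement of $\{\tau>t\}$ into ``$R_j$ avoids $X_{j-1},\dots,X_{t-1}$ for each $j$'' and the product formula $(1-\alpha_n)^{t(t+1)/2}$ is exactly the paper's computation; one misstatement: ``on $\{\tau>t\}$ the partner of $X_s$ in every $C_{s'}$ with $s'\geq s$ coincides with $C_0(X_s)$'' is false as written (the event only forces $X_{s-1}\not\in R_{\leq s}$; after time $s$ the half-edge $X_{s-1}$ is free to be rewired), though what you actually use---that $C_s(X_{s-1})=C_0(X_{s-1})$ at the moment the walk steps---is correct. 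For \eqref{equ:unstoppedtv}, the confinement to $B_t^\eta(x)$ and the estimate $U_H(B_t^\eta(x))=o(1)$ is precisely the paper's proof.

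For \eqref{equ:stoppedtv} your proposal takes a genuinely different route, and it is where the real gap lies. The paper does \emph{not} argue that the walker's position at time $\tau$ is nearly uniform and then propagate that forward; instead it establishes a direct combinatorial lower bound $\prob_{\eta,x}(X_t=y,\tau\leq t)\geq\tfrac{1-o(1)}{\ell}\,\prob_{\eta,x}(\tau\leq t)$ for most $y$, by summing over \emph{all} self-avoiding segmented paths from $x$ to $y$ and over \emph{all} subsets $T\subseteq[1,t]$ of times at which a rewired edge is crossed (Lemmas~\ref{lem:equalpathprobs}, \ref{lem:probsinglepath}, \ref{lem:sizenicetuples}, \ref{lem:lowerboundsumsspaths}). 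Your proposed argument---that $Y=C_\tau(X_{\tau-1})$ is $o(1)$-close to $U_H$, that the sibling step preserves this, and that doubly-stochastic walk steps then preserve near-uniformity up to time $t$---is not wrong in spirit, but its load-bearing claim is unproved. What you actually need is that, conditionally on $\{\tau=s\}$ (and then summing over $s$), the joint law of $(X_s,C_s)$ is close to $U_H$ tensored with the law of $C_s$; only then does the double-stochasticity argument close. That joint near-independence is a statement about the graph process conditioned on a walk-dependent event, and the conditioning on $\{\tau\leq t\}$ biases not only the distribution of $X_{\tau-1}$ (through the requirement that the walk survived $\tau-1$ steps without hitting a rewired edge, which is degree- and trajectory-dependent) but also the rewiring history, hence $C_\tau$. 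You flag this as ``delicate,'' but you do not indicate how it would be resolved; the paper's segmented-path machinery is precisely the device that replaces this difficult conditional-law comparison by an elementary counting bound. In the form you wrote it, the proof of \eqref{equ:stoppedtv} has a gap at its central step.

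A secondary point: all three claims are ``whp in $\eta$ and $x$,'' so each estimate must be converted into a high-probability statement over the initial data via Markov-type bounds and a union over the $2^t$ choices of $T$; the paper does this carefully in Lemma~\ref{lem:sizenicetuples}, and your outline leaves this bookkeeping implicit.
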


We close this section by showing how Theorem~\ref{thm:mainthm} follows from 
Proposition~\ref{prop:mainprop}:

\begin{proof}
By Condition~\ref{fast},
\begin{equation}
\sqrt{\frac{2\log(1/\varepsilon)}{\log(1/(1-\alpha_n))}} = O(\alpha_n^{-1/2}) = o(\log n).
\end{equation}
Using the bounds in~\eqref{equ:tvuppertriangle}--\eqref{equ:tvlowertriangle}, together 
with \eqref{equ:stoppedtv}--\eqref{equ:tautailprob} in Proposition~\ref{prop:mainprop}, 
we see that for $t=o(\log n)$,
\begin{equation}
(1-\alpha_n)^{t(t+1)/2} + o(1) \leq \mathcal{D}_{\eta,x}(t) \leq (1-\alpha_n)^{t(t+1)/2} + o(1).
\end{equation}
Choosing $t$ as in~\eqref{mix} we obtain $\mathcal{D}_{\eta,x}(t) = \varepsilon + o(1)$, 
which is the desired result.
\end{proof}

The remainder of the paper is devoted to the proof of Proposition~\ref{prop:mainprop}.


\section{Pathwise probabilities}
\label{3}

In order to prove \eqref{equ:stoppedtv} of Proposition~\ref{prop:mainprop}, we will show 
in~\eqref{BOUND} in Section~\ref{mainProof} that the following crucial bound holds for 
\emph{most} $y\in H$:
\begin{equation}
\label{equ:taustoppedtvlowerbound}
\prob_{\eta,x}(X_t = y\mid \tau\leq t) \geq \frac{1-o(1)}{\ell}.
\end{equation}
By \emph{most} we mean that the number of $y$ such that this inequality holds is $\ell - o(\ell)$ 
whp in $\eta$ and $x$. To prove~\eqref{equ:taustoppedtvlowerbound} we will look at $\prob_{\eta,x}
(X_t = y,\tau\leq t)$ by partitioning according to all possible paths taken by the walk and all possible 
rewiring patterns that occur on these paths. For a time interval $[s,t]\coloneqq\{s,s+1,\ldots,t\}$ 
with $s\leq t$, we define 
\begin{equation}
x_{[s,t]} \coloneqq x_s\cdots x_t.
\end{equation}
In particular, for any $y\in H$,
\begin{alignat}{2}
\label{equ:tstepwithstop}
&\prob_{\eta,x}(X_t = y, \tau\leq t) 
&&\\
&= \sum_{r = 1}^t\sum_{\substack{T\subseteq [1,t]\\|T|=r}}
\sum_{x_1,\dots,x_{t-1}\in H} \prob_{\eta,x}\Big(X_{[1,t]} = x_{[1,t]}, 
&& \,\, x_{i-1}\in R_{\leq i}\,\,\forall\, i\in T, 
\,\,x_{j-1}\not\in R_{\leq j}\,\,\forall\, j\in [1,t]\setminus T\Big)
\nn
\end{alignat}
with $x_0 = x$ and $x_t = y$. Here, $r$ is the number of steps at which the walk moves along a
previously rewired edge, and $T$ is the set of times at which this occurs.

For a fixed sequence of half-edges $x_{[0,t]}$ with $x_0 = x$ and a fixed set of times $T \subseteq [1,t]$ 
with $|T| = r$, we will use the short-hand notation 
\begin{equation}
A(x_{[0,t]}; T) \coloneqq \big\{x_{i-1} \in R_{\leq i}\,\,\forall\, i\in T, \,x_{j-1}\not\in R_{\leq j}\,\,\forall\, 
j\in [1,t] \setminus T\big\}.
\end{equation}
Writing $T = \{t_1,\dots,t_r\}$ with $1 \leq t_1 < t_2 < \dots < t_r \leq t$, we note that the conditional 
probability $\prob_{\eta,x}(X_{[1,t]} = x_{[1,t]}\mid A(x_{[0,t]}; T))$ can be non-zero only if each 
subsequence $x_{[t_{i-1},t_i-1]}$ induces a non-backtracking path in $\eta$ for $i \in [2,r+1]$ 
with $t_0 = 0$ and $t_{r+1} = t+1$. The last sum in~\eqref{equ:tstepwithstop} is taken over 
such sequences in $H$, which we call \emph{segmented paths} (see Fig.~\ref{segp}). For each 
$i \in [1,r+1]$ the subsequence $x_{[t_{i-1},t_i-1]}$ of length $t_i - t_{i-1}$ that forms a non-backtracking 
path in $\eta$ is called a \emph{segment}. 

\begin{figure}[htbp]
\centering
\includegraphics[width=0.4\textwidth]{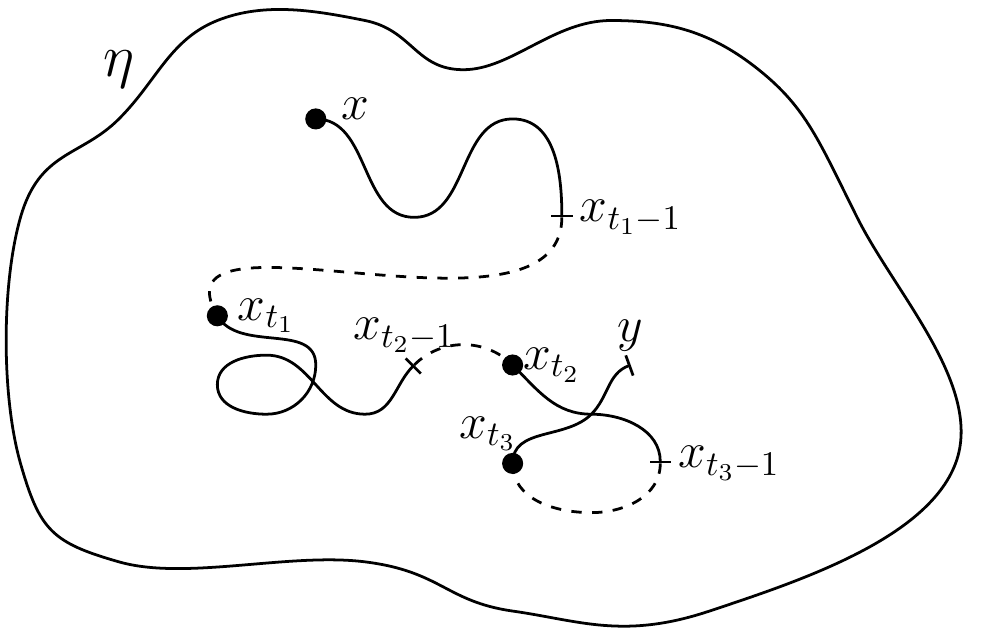}
\caption{\small An example of a segmented path with 4 segments. Solid lines represent the segments, 
consisting of a path of half-edges in $\eta$, dashed lines indicate the succession of the segments. 
The latter do not necessarily correspond to a pair in $\eta$, and will later correspond to rewired 
edges in the graph dynamics.}
\label{segp}
\end{figure}

We will restrict the last sum in~\eqref{equ:tstepwithstop} to the set of \emph{self-avoiding segmented paths}. 
These are the paths where no two half-edges are siblings, which means that the vertices $v(x_i)$ visited 
by the half-edges $x_i$ are distinct for all $i \in [0,t]$, so that if the random walk takes this path, then it does 
not see the same vertex twice. We will denote by $\SP_t^\eta(x,y;T)$ {\em the set of self-avoiding segmented 
paths} in $\eta$ of length $t+1$ that start at $x$ and end at $y$, where $T$ gives the positions of the 
ends of the segments (see Fig.~\ref{sasegp}). Segmented paths $x_{[0,t]}$ have the nice property that the 
probability $\prob_{\eta,x}(A(x_{[0,t]};T))$ is the same for all $x_{[0,t]}$ that are isomorphic, as stated
in the next lemma:

\begin{figure}[htbp]
\centering
\includegraphics[width=0.4\textwidth]{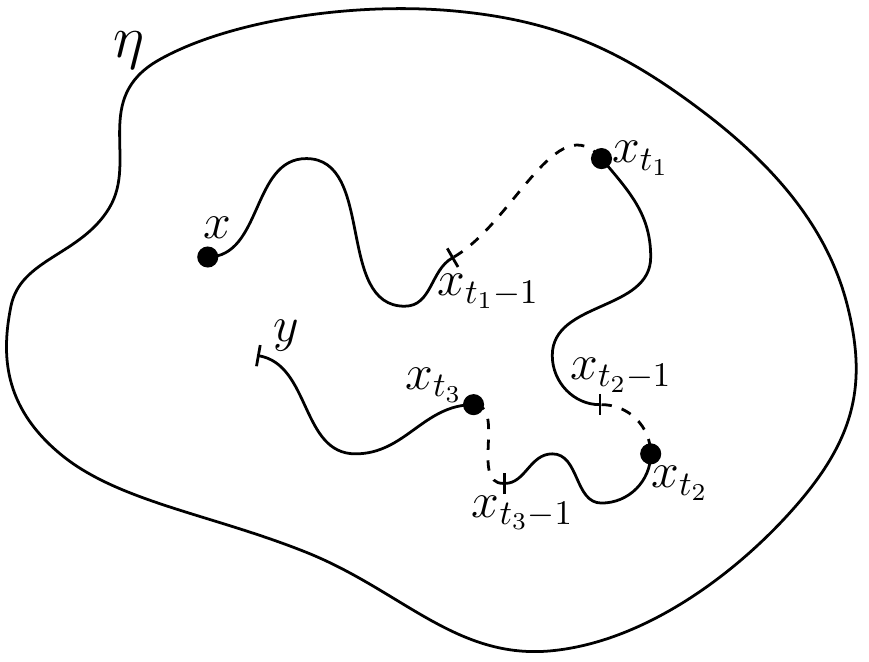}
\caption{\small An element of $\SP_t^\eta(x,y;T)$ with $T=\{t_1,t_2,t_3\}$.}
\label{sasegp}
\end{figure}

\begin{lemma}[\bf{Isomorphic segmented path are equally likely}]
\label{lem:equalpathprobs}
Fix $t\in\N$, $T\subseteq[1,t]$ and $\eta\in\Conf_H$. Suppose that $x_{[0,t]}$ and $y_{[0,t]}$ are two 
segmented paths in $\eta$ of length $t+1$ with $|x_{[s,s']}| = |y_{[s,s']}|$ for any  $0\leq s<s' \leq t$, 
where $|x_{[s,s']}|$ denotes the number of distinct half-edges in $x_{[s,s']}$. Then
\begin{equation}
\prob_{\eta,x}\big(A(x_{[0,t]};T)\big) = \prob_{\eta,x}\big(A(y_{[0,t]};T)\big).
\end{equation}
\end{lemma}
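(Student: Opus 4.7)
The plan is to exploit the symmetry of the rewiring dynamics. First I would note that the event $A(x_{[0,t]};T)$ only involves the graph process $(C_s)_{s\ge 1}$ and not the random walk, and make the following reduction: for any half-edge $h$, the condition $h\in R_{\le s}$ is equivalent to the original edge $\{h,\eta(h)\}$ being selected for rewiring at some time $s'\le s$. (Indeed, the first time $h$ enters any $R_{s'}$, the edge containing $h$ in $C_{s'-1}$ must still be its original $\eta$-partner, for otherwise $h$ would already appear in an earlier $R_{s''}$.) Hence $A(x_{[0,t]};T)$ is equivalent to a conjunction of constraints on the first-selection-times $\tau_{e_i}$ of the original edges $e_i:=\{x_i,\eta(x_i)\}$: namely $\tau_{e_{i-1}}\le i$ for $i\in T$ and $\tau_{e_{j-1}}>j$ for $j\in[1,t]\setminus T$.

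Second, the rewiring dynamics is symmetric in the $m$ edges of the current configuration, since at each step a uniformly random $k$-subset is drawn without replacement and $m$ does not change in time. Therefore the law of the rewiring history is invariant under any bijection of half-edges that preserves $\eta$, and the joint law of $(\tau_{e_i})_{i\in[0,t-1]}$ — and thus $\prob_{\eta,x}(A(x_{[0,t]};T))$ — depends only on the combinatorial equivalence pattern recording which positions $i\in[0,t-1]$ are associated with the same original edge, together with the temporal data $T$. It therefore suffices to show that $x_{[0,t]}$ and $y_{[0,t]}$ induce the same edge-coincidence pattern.

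For this last step, I would first extract the half-edge coincidence pattern $x_i=x_j\iff y_i=y_j$ from the hypothesis $|x_{[s,s']}|=|y_{[s,s']}|$ by a short induction on $s'-s$, using that $x_{s'}$ is a repeat within $x_{[s,s']}$ exactly when $|x_{[s,s'-1]}|=|x_{[s,s']}|$. Since two original edges satisfy $e_i=e_j$ precisely when either $x_i=x_j$ or $x_i=\eta(x_j)$, it then remains to line up the latter type of coincidence, and here the non-backtracking condition within each segment of a segmented path forbids $x_j=\eta(x_{j+1})$ whenever $j$ and $j+1$ lie in the same segment. Combined with the shared segment decomposition, this forces the lift from half-edge to edge coincidences to go the same way for both paths.

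The main obstacle is this last combinatorial step — showing that the equality of sub-interval cardinalities genuinely forces the edge-coincidence patterns to agree, and in particular dealing with possible coincidences $x_i=\eta(x_j)$ arising across segment boundaries. The segmented-path assumption, together with the carefully chosen cardinality hypothesis, is what makes the symmetry reduction rigorous.
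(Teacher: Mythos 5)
Your reduction to the first-selection times $\tau_{e_i}$ of the original edges $e_i=\{x_i,\eta(x_i)\}$, combined with exchangeability of these times under the rewiring dynamics, is a genuinely different route from the paper's. The paper instead builds an explicit involution $f\colon H\to H$ with $f\circ\eta=\eta\circ f$ sending $x_i\mapsto y_i$ and $\eta(x_i)\mapsto\eta(y_i)$, and couples two copies of the dynamics through $f$ so that $A(x_{[0,t]};T)$ and $A(y_{[0,t]};T)$ become literally the same event under the coupling. Your version is cleaner in one respect: it isolates exactly what the probability can depend on, namely the partition of $\{0,\dots,t-1\}$ induced by $e_i=e_j$, and it does not require a globally defined $\eta$-preserving bijection, nor does it have to worry about the case where $x_{[0,t]}$ and $y_{[0,t]}$ share half-edges.

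The last step, however, is a genuine gap, and you were right to flag it as the obstacle rather than gloss over it. The cardinality hypothesis $|x_{[s,s']}|=|y_{[s,s']}|$ does pin down the half-edge pattern $x_i=x_j\iff y_i=y_j$ (your induction is fine), but it carries no information about the cross coincidences $x_i=\eta(x_j)$ that are needed to decide when $e_i=e_j$. Non-backtracking within a segment excludes $x_j=\eta(x_{j+1})$ for consecutive indices inside one segment, and self-avoidance then restricts cross coincidences to pairs of segment-end indices; but those are precisely what the cardinalities do not see. For instance, take $T=\{2,4\}$ and $t=5$, so the segment ends are at indices $1,3,5$. A self-avoiding segmented path $x_{[0,5]}$ with $\eta(x_1)=x_3$ (hence $e_1=e_3$) and a self-avoiding segmented path $y_{[0,5]}$ with all $e_i$ distinct have identical sub-interval cardinalities $s'-s+1$, yet the constraints $\tau_{e_1}\le2$ and $\tau_{e_3}\le4$ in $A(\cdot\,;T)$ collapse to a single constraint for $x$ but remain two constraints for $y$, so the probabilities differ. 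Thus the sentence ``this forces the lift from half-edge to edge coincidences to go the same way'' is exactly what remains to be proved, not a consequence of what precedes it. (You are in good company: the paper's one-line assertion that $f$ is one-to-one leans on the same unverified matching of cross coincidences, and the lemma is in fact applied only to self-avoiding segmented paths where this extra identification is rare enough to be absorbed into error terms.)
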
 

\begin{proof}
Fix $x,y \in H$. Consider the coupling $((C_t^x)_{t\in\N_0},(C_t^y)_{t\in\N_0})$ of two dynamic 
configuration models with parameter $k$ starting from $\eta$, defined as follows. Let $f\colon\,
H\to H$ be such that
\begin{align}
f(x) =
\begin{cases}
y_i &\text{ if }x = x_i\text{ for some }i\in[0,t], \\
x_i &\text{ if }x = y_i\text{ for some }i\in[0,t],\\
\eta(y_i) &\text{ if }x = \eta(x_i)\text{ for some }i\in[0,t], \\
\eta(x_i) &\text{ if }x = \eta(y_i)\text{ for some }i\in[0,t], \\
x &\text{ otherwise.}
\end{cases}
\end{align}
This is a one-to-one function because $|x_{[s,s']}| = |y_{[s,s']}|$ for any  $0\leq s<s' \leq t$. What $f$ 
does is to map the half-edges of $x_{[0,t]}$ and their pairs in $\eta$ to the half-edges of $y_{[0,t]}$ 
and their pairs in $\eta$, and vice versa, while preserving the order in the path. For the coupling, at 
each time $t\in\N$ we rewire the edges of $C_{t-1}^x$ and $C_{t-1}^y$ as follows:
\begin{enumerate}
\item 
Choose $k$ edges from $C_{t-1}^x$ uniformly at random without replacement, say $\{z_1,z_2\},\dots,$ 
$\{z_{2k-1},z_{2k}\}$. Choose the edges $\{f(z_1),f(z_2)\},\dots,\{f(z_{2k-1}),f(z_{2k})\}$ from $C_{t-1}^y$.
\item 
Rewire the half-edges $z_1,\dots,z_{2k}$ uniformly at random to obtain $C_t^x$. Set $C_t^y(f(z_i)) 
= f(C_t^x(z_i))$.
\end{enumerate}
Step 2 and the definition of $f$ ensure that in Step 1 $\{f(z_1),f(z_2)\},\dots,\{f(z_{2k-1}),f(z_{2k})\}$ 
are in $C_{t-1}^y$. Since under the coupling the event $A(x_{[0,t]};T)$ is the same as the event 
$A(y_{[0,t]};T)$, we get the desired result.
\end{proof}

In order to prove the lower bound in~\eqref{equ:taustoppedtvlowerbound}, we will need two key facts. 
The first, stated in Lemma~\ref{lem:probsinglepath} below, gives a lower bound on the probability of 
a walk trajectory given the rewiring history. The second, stated in Lemma~\ref{lem:sizenicetuples} below,
is a lower bound on the number of relevant self-avoiding segmented paths, and exploits the locally 
tree-like structure of the configuration model. 

\begin{lemma}[{\bf Paths estimate given rewiring history}]
\label{lem:probsinglepath}
Suppose that $t = t(n) = o(\log n)$ and $T =\{t_1,\dots,t_r\}\subseteq [1,t]$. Let $x_0\cdots x_t\in
\SP_t^\eta(x,y;T)$ be a self-avoiding segmented path in $\eta$ that starts at $x$ and ends at $y$. 
Then
\begin{equation}
\prob_{\eta,x}\big(X_{[1,t]} = x_{[1,t]}\mid A(x_{[0,t]};T)\big) 
\geq \frac{1-o(1)}{\ell^r}\prod_{i\in[1,t]\setminus T}\frac{1}{\deg(x_i)}.
\end{equation} 
\end{lemma}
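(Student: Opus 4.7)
The plan is to condition on the trajectory of the graph process, use the uniformity of the random matchings generated during the rewirings, and exploit the self-avoiding segmented-path structure to convert these matchings into the claimed product bound. Since the walk transition obeys $P_{C_i}(X_{i-1},X_i)=\mathbf{1}\{C_i(X_{i-1})\sim X_i,\,C_i(X_{i-1})\neq X_i\}/\deg(X_i)$, the first step is the identity
\begin{equation}
\prob_{\eta,x}\big(X_{[1,t]}=x_{[1,t]},\,A(x_{[0,t]};T)\big)
= \prod_{i=1}^{t}\frac{1}{\deg(x_i)}\cdot\prob_{\eta,x}\big(A(x_{[0,t]};T)\cap B'\big),
\end{equation}
where $B'\coloneqq\bigcap_{i\in T}\{C_i(x_{i-1})\sim x_i,\,C_i(x_{i-1})\neq x_i\}$. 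The key point is that on $A(x_{[0,t]};T)$, for every $i\notin T$ we have $C_i(x_{i-1})=\eta(x_{i-1})$, and the assumption $x_{[0,t]}\in\SP_t^\eta(x,y;T)$ forces $\eta(x_{i-1})\sim x_i$ and $\eta(x_{i-1})\neq x_i$, so those indicators are automatic. Dividing by $\prob_{\eta,x}(A)$ therefore reduces the lemma to establishing
\begin{equation}
\label{eq:planbd}
\prob_{\eta,x}\big(B'\,\big|\,A(x_{[0,t]};T)\big)\;\geq\;(1-o(1))\,\prod_{i\in T}\frac{\deg(x_i)}{\ell}.
\end{equation}

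To prove \eqref{eq:planbd}, I would condition further on the $\sigma$-algebra generated by the sequence $(R_s)_{s\leq t}$ of rewired half-edge sets; the event $A$ is measurable with respect to this information. Given it, each $C_s$ restricted to $R_s$ is a uniformly random perfect matching of $R_s$. For each $i\in T$, let $s_i^\star$ denote the most recent time $\leq i$ at which $x_{i-1}$ was rewired, so that $C_i(x_{i-1})=C_{s_i^\star}(x_{i-1})$ is uniform on $R_{s_i^\star}\setminus\{x_{i-1}\}$. When the $s_i^\star$ are all distinct, the constraints inside $B'$ are conditionally independent and
\begin{equation}
\prob\big(B'\,\big|\,(R_s)_s,A\big)
=\prod_{i\in T}\frac{|R_{s_i^\star}\cap(H(v(x_i))\setminus\{x_i\})|}{2k-1}.
\end{equation}
Since each edge of $C_{s-1}$ is selected to be rewired with probability $k/m=2k/\ell$, linearity of expectation gives $\expec[|R_s\cap S|]=2k|S|/\ell$ for any $S\subseteq H$. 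Combining this with the self-avoidance of $x_{[0,t]}$, which makes the target sets $H(v(x_i))\setminus\{x_i\}$ mutually disjoint, lets one take the conditional expectation factor by factor and obtain the product in~\eqref{eq:planbd}.

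The main obstacle I foresee is the treatment of coincidences $s_{i_1}^\star=s_{i_2}^\star$, where two distinct half-edges $x_{i_1-1}$ and $x_{i_2-1}$ must be handled by a single matching and the events inside $B'$ are no longer conditionally independent. I would address this by revealing the pairs of $R_s$ sequentially: after conditioning on one pair, the next pair is uniform on $2k-3$ remaining half-edges, which shifts each per-step probability by $O(1/k)$, and by disjointness of the sibling sets these shifts accumulate to a multiplicative error of $1+O(t/k)=1+o(1)$ under Condition~\ref{fast} (since $1/k=O(1/(\alpha_n n))$ and $t=o(\log n)$). Two further technicalities require some care: first, $R_s$ is built from $k$ uniformly chosen edges of $C_{s-1}$ rather than from a uniform $2k$-subset of $H$, contributing a bias of order $1/\ell$ per step; second, conditioning on $A$ prescribes that specific half-edges $x_{t_j-1}$ lie in $R_{\leq t_j}$, which perturbs the laws of the relevant $R_s$ at only $r\leq t$ locations. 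Both corrections yield lower-order contributions that are absorbed into the $1-o(1)$ factor, after which multiplying the per-step estimates yields~\eqref{eq:planbd} and hence the lemma.
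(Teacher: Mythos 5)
Your reduction to the bound \eqref{eq:planbd} on $\prob_{\eta,x}(B'\mid A)$ is correct and essentially reproduces the paper's ``open path'' decomposition, and you correctly note that for $i\notin T$ the indicators inside $B'$ are automatic once we condition on $A$. The trouble starts with the claim that, conditionally on the full sequence $(R_s)_{s\leq t}$, each $C_s$ restricted to $R_s$ is a \emph{uniformly random} matching of $R_s$, so that $C_{s_i^\star}(x_{i-1})$ is uniform on $R_{s_i^\star}\setminus\{x_{i-1}\}$. This is false: the set $R_{s+1}$ consists of $k$ whole \emph{edges of} $C_s$, so knowing $R_{s+1}$ reveals genuine information about the matching $C_s\vert_{R_s}$. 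Concretely, any $z\in R_s\cap R_{s+1}$ must satisfy $C_s(z)\in R_s\cap R_{s+1}$, so the matching splits into a matching of $R_s\cap R_{s+1}$ and a matching of $R_s\setminus R_{s+1}$; in particular, knowing that $x_{i-1}\notin R_{s_i^\star+1}$ forces $C_{s_i^\star}(x_{i-1})\in R_{s_i^\star}\setminus R_{s_i^\star+1}$, not uniform on all of $R_{s_i^\star}\setminus\{x_{i-1}\}$. The two ``technicalities'' you flag (the $R_s$'s being built from $k$ edges of $C_{s-1}$ rather than a uniform $2k$-subset, and the perturbation of the $R_s$ laws by conditioning on $A$) do not address this: they concern biases in the marginal law of $R_s$, whereas the actual issue is that conditioning on $\sigma((R_s)_s)$ is \emph{too fine} a conditioning and ties the matchings to the future $R$'s. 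You would need to quantify this matching bias uniformly over the realizations of $(R_s)_s$ compatible with $A$, and to then justify factorizing $\expec[\prod_{i\in T}\cdots\mid A]$, which you currently pass over with ``linearity of expectation''.

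The paper sidesteps exactly this pitfall by refining $A$ only to the event $\widehat{A}$ specifying the last rewiring times $s_j$ of the half-edges $x_{t_j-1}$ (not the full sets $R_s$), grouping the $s_j$ into distinct times $s'_i$, and then bounding $\prob(D_i\mid\widehat{A},\cap_{j<i}D_j,C_{s'_i-1}=\eta')$ directly by analyzing in one pass the two stages of the rewiring at time $s'_i$: the probability that the correct target half-edges $z_j$ are selected among the $k$ edges, and the probability that the uniform matching of the $2k$ selected half-edges pairs the $y_j$'s to the $z_j$'s. This yields $(1-O(n_i^y/k))/(2m)$ per constraint without ever needing the matchings to be conditionally uniform given the whole trajectory of $R$'s. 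If you want to salvage your route, you would either have to coarsen your conditioning to something like $\widehat{A}$, or prove an explicit bound on the total-variation distance between the law of $C_{s_i^\star}(x_{i-1})$ given $(R_s)_s,A$ and the uniform law on $R_{s_i^\star}\setminus\{x_{i-1}\}$; as written, the argument has a real gap.
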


\begin{proof}
In order to deal with the dependencies introduced by conditioning on $A(x_{[0,t]};T))$, we will go through 
a series of conditionings. First we note that for the random walk to follow a specific path, the half-edges it 
traverses should be rewired \emph{correctly} at the \emph{right times}. Conditioning on $A(x_{[0,t]};T)$ 
accomplishes part of the job: since we have $x_{i-1}\not\in R_{\leq i}$ for $i\in [1,t]\setminus T$ and $x_{[0,t]} 
\in\SP_t^\eta(x,y;T)$, we know that, at time $i$, $x_{i-1}$ is paired to a sibling of $x_i$ in $C_i$, and so 
the random walk can jump from $x_{i-1}$ to $x_i$ with probability $1/\deg(x_i)$ at time $i$ for 
$i\in [1,t]\setminus T$. 

Let us call the path $x_{[0,t]}$ \emph{open} if $C_i(x_{i-1})\sim x_i$ for $i \in[1,t]$, i.e., if $x_{i-1}$ is 
paired to a sibling of $x_i$ in $C_i$ for $i \in [1,t]$. Then
\begin{equation}
\label{pathcondonopen}
\prob_{\eta,x}(X_{[1,t]} = x_{[1,t]}\mid x_{[0,t]}\text{ is open}) = \prod_{i=1}^t \frac{1}{\deg(x_i)},
\end{equation}
and
\begin{equation}
\prob_{\eta,x}\big(X_{[1,t]} = x_{[1,t]}\mid x_{[0,t]}\text{ is not open}\big) = 0.
\end{equation}
Using these observations, we can treat the random walk and the rewiring process separately, since the 
event $\{x_{[0,t]}\text{ is open}\}$ depends only on the rewirings. Our goal is to compute the probability
\begin{equation}
\prob_{\eta,x}\big(x_{[0,t]}\text{ is open}\mid A(x_{[0,t]};T)\big).
\end{equation}

Note that, by conditioning on $A(x_{[0,t]};T)$, the part of the path within segments is already open, so 
we only need to deal with the times the walk jumps from one segment to another. To have $x_{[0,t]}$ 
open, each $x_{t_j-1}$ should be paired to one of the siblings of $x_{t_j}$ for $j\in[1,r]$. Hence
\begin{align}
\label{openpathsum}
&\prob_{\eta,x}\big(x_{[0,t]}\text{ is open}\mid A(x_{[0,t]};T)\big) \nn \\
&\qquad= \sum_{\substack{z_1,\dots,z_r\in H\\z_j\sim x_{t_j}\,\forall\,j\in[1,r]}} 
\prob_{\eta,x}\big(C_{t_j}(x_{t_j-1})=z_j\,\,\forall\,j\in[1,r]\mid A(x_{[0,t]};T)\big).
\end{align}
Fix $z_1,\dots,z_r\in H$ with $z_j\sim x_{t_j}$, and let $y_j = x_{t_j-1}$ for $j\in[1,r]$. We will look at the 
probability
\begin{equation}
\prob_{\eta,x}\big(C_{t_j}(y_j)=z_j\,\,\forall\,j\in[1,r]\mid A(x_{[0,t]};T)\big).
\end{equation}

Conditioning on the event $A(x_{[0,t]};T)$ we impose that each $y_j$ is rewired at some time before $t_j$, 
but do not specify at which time this happens. Let us refine our conditioning one step further by 
specifying these times. Fix $s_1,\dots,s_r\in[1,t]$ such that $s_j\leq t_j$ for each $j\in[1,r]$ (the 
$s_j$ need not be distinct). Let $\widehat{A}$ be the event that $x_{i-1}\not\in R_{\leq i}$ for 
$i\in[1,t]\setminus T$ and $y_j$ is rewired at time $s_j$ for the last time before time $t_j$ for $j\in[1,r]$.
Then $\widehat{A}\subseteq A(x_{[0,t]};T)$. Since $s_j$ is the last time before $t_j$ at which $y_j$ is 
rewired, the event $C_{t_j}(y_j)=z_j$ is the same as the event $C_{s_j}(y_j)=z_j$ when we condition 
on $\widehat{A}$. We look at the probability
\begin{equation}
\prob_{\eta,x}\big(C_{s_j}(y_j)=z_j\,\,\forall\,j\in[1,r]\mid \widehat{A}\,\big).
\end{equation}
Let $s'_1<\dots<s'_{r'}\in[1,t]$ be the distinct times such that $s'_i = s_j$ for some $j\in[1,r]$, and $n_i^y$ 
the number of $j$'s for which $s_j = s'_i$ for $i\in[1,r']$, so that by conditioning on $\widehat{A}$ we 
rewire $n_i^y$ half-edges $y_j$ at time $s'_i$. Letting also $D_i = \{C_{s'_i}(y_j)=z_j,\text{ for } j
\text{ such that } s_j = s'_i\}$, we can write the above conditional probability as
\begin{equation}
\label{diprod}
\prod_{i=1}^{r'}\prob_{\eta,x}\big(D_i\mid \widehat{A},\,\cap_{j=1}^{i-1}D_j\big).
\end{equation}
We next compute these conditional probabilities.

Fix $i\in[1,r']$ and $\eta'\in\Conf_H$. We do one more conditioning and look at the probability
\begin{equation}
\prob_{\eta,x}\big(D_i\mid \widehat{A},\,\cap_{j=1}^{i-1}D_j,\,C_{s'_i-1}=\eta'\big).
\end{equation}
The rewiring process at time $s'_i$ consists of two steps: (1) pick $k$ edges uniformly at random;
(2) do a uniform rewiring. Concerning (1), by conditioning on $\widehat{A}$, we see that the $y_j$'s 
for which $s_j=s'_i$ are already chosen. In order to pair these to $z_j$'s with $s_j=s'_i$, the $z_j$'s 
should be chosen as well. If some of the $z_j$'s are already paired to some $y_j$'s already chosen, 
then they will be automatically included in the rewiring process. Let $m_i'$ be $m$ minus the number 
of half-edges in $\{x_0,\dots,x_t\}\cup\{z_1,\dots,z_r\}$, for which the conditioning on $\widehat{A}$ 
implies that they cannot be in $R_{s'_i}$. Then
\begin{align}
&\prob_{\eta,x}\Big(z_j\in R_{s'_i}\text{ for }j\text{ such that }s_j=s'_i \,~\Big|\,~\widehat{A},
\,\cap_{j=1}^{i-1}D_j,\,C_{s'_i-1}=\eta'\Big) \nn \\ 
&\geq \frac{\binom{m'_i-2n_i^y}{k-2n_i^y}}{\binom{m_i'-n_i^y}{k-n_i^y}}
= \frac{\prod_{j=0}^{n_i^y-1}(k-n_i^y-j)}{\prod_{j=0}^{n_i^y-1}(m'_i-n_i^y-j)} 
\geq \frac{\prod_{j=0}^{n_i^y-1}(k-n_i^y-j)}{m^{n_i^y}}.
\end{align}
Concerning (2), conditioned on the relevant $z_j$'s already chosen in (1), the probability that they will 
be paired to correct $y_j$'s is
\begin{equation}
\frac{1}{\prod_{j=1}^{n_i^y}(2k-2j+1)}.
\end{equation}
Since the last two statements hold for any $\eta'$ with $\prob_{\eta,x}(C_{s'_i-1}=\eta'\mid \widehat{A},\,
\cap_{j=1}^{i-1}D_j) > 0$, combining these we get
\begin{align}
\label{rol1}
\prob_{\eta,x}\big(D_i\mid \widehat{A},\,\cap_{j=1}^{i-1}D_j\big) 
&\geq \frac{\prod_{j=0}^{n_i^y-1}(k-n_i^y-j)}{m^{n_i^y}\prod_{j=1}^{n_i^y}(2k-2j+1)}
= \left(\frac{1-O(n_i^y/k)}{2m}\right)^{n_i^y}.
\end{align}
Since $\sum_{i=1}^{r'}n_i^y = r$, substituting \eqref{rol1} into~\eqref{diprod} and rolling back all the 
conditionings we did so far, we get
\begin{equation}
\prob_{\eta,x}\big(C_{t_j}(x_{t_j-1})=z_j\,\,\forall\,j\in[1,r]\mid A(x_{[0,t]};T)\big) 
\geq \frac{1-O(r^2/k)}{\ell^r} = \frac{1-o(1)}{\ell^r},
\end{equation}
where we use that $r^2/k\rightarrow 0$ since $r=o(\log{n})$ and $k=\alpha_n n$ with $(\log n)^2
\alpha_n\rightarrow \infty$. Now sum over $z_1,\dots,z_r$ in~\eqref{openpathsum}, to obtain
\begin{equation}
\prob_{\eta,x}\big(x_{[0,t]}\text{ is open}\mid A(x_{[0,t]};T)\big) 
\geq \frac{(1-o(1)) \prod_{j=1}^r \deg(x_{t_j})}{\ell^r},
\end{equation}
and multiply with~\eqref{pathcondonopen} to get the desired result.
\end{proof}


\section{Tree-like structure of the configuration model}
\label{treelike}

In this section we look at the structure of the neighborhood of a half-edge chosen uniformly at 
random in the configuration model. Since we will work with different probability spaces, we will 
denote by $\prob$ a generic probability measure whose meaning will be clear from the context.

For fixed $t\in\N$, $x\in H$ and $\eta\in\Conf_H$, we denote by $B_t^\eta(x):=\{y\in H\colon\,
\dist_{\eta}(x,y)\leq t\}$ the $t$-neighborhood of $x$ in $\eta$, where $\dist_{\eta}(x,y)$ is \emph{the 
length of the shortest non-backtracking path from $x$ to $y$}. We start by estimating the mean 
of $|B_t^\eta(x)|$, the number of half-edges in $B_t^\eta(x)$.

\begin{lemma}[{\bf Average size of balls of relevant radius}]
\label{lem:expecballsize}
Let  $\nu_n$ be as in Condition~{\rm \ref{cond-degree-reg}} and suppose that $t = t(n) = o(\log n)$.
Then, for any $\delta > 0$,
\begin{equation}
\expec(|B_t^\eta(x)|) = [1+o(1)]\,\nu_n^{t+1} = o(n^\delta),
\end{equation}
where the expectation is w.r.t.\ $\mu_n$ in~\eqref{mu}.
\end{lemma}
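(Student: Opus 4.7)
The plan is to bound the expected ball size by a first-moment computation. I would upper-bound $|B_t^\eta(x)|$ by a count of non-backtracking walks starting at $x$, then use the combinatorial tractability of the uniform pairing $\eta$ to evaluate the expected walk count. The payoff $o(n^\delta)$ will then be immediate from the boundedness of $\nu_n$ (Condition~(R2)) together with $t = o(\log n)$.

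First, I would decompose $|B_t^\eta(x)| = \sum_{s=0}^{t} L_s^\eta(x)$, where $L_s^\eta(x) := |\{y \in H : \dist_\eta(x,y) = s\}|$. Since every half-edge at non-backtracking distance exactly $s$ from $x$ is the terminal point of at least one non-backtracking walk of length $s$ starting at $x$, I have $L_s^\eta(x) \leq N_s^\eta(x)$, where $N_s^\eta(x)$ denotes the number of such walks; the two agree up to lower-order corrections in the tree-like regime that will emerge from the analysis.

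Next, I would evaluate $\expec_{\mu_n}[N_s^\eta(x)]$ by expanding over candidate walks $x = x_0, x_1, \dots, x_s$. Setting $y_i := \eta(x_{i-1})$, the walk is valid precisely when $y_i \in H(v(x_i)) \setminus \{x_i\}$ for every $i$, i.e.\ when $\eta$ contains the $s$ specified pairs $\{x_{i-1}, y_i\}$. Under the uniform distribution on $\Conf_H$, any prescribed set of $s$ disjoint pairs is contained in $\eta$ with probability
\begin{equation}
\frac{(\ell - 2s - 1)!!}{(\ell - 1)!!} = \frac{1 + o(1)}{\ell^s},
\end{equation}
valid since $s \leq t = o(\log n)$ and $\ell = \Theta(n)$ by Condition~(R1). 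For fixed $x$, the number of tuples $(x_1, y_1, \dots, x_s, y_s)$ with the $2s+1$ half-edges pairwise distinct is $[1 + o(1)] (\ell \nu_n)^s$, because for each $i$ the number of ordered sibling pairs $(x_i, y_i)$ with $x_i \neq y_i$ equals $\sum_v d(v)(d(v)-1) = \ell \nu_n$. Multiplying the two factors yields $\expec_{\mu_n}[N_s^\eta(x)] = [1 + o(1)] \nu_n^s$, and summing over $s \in [0,t]$ gives the first equality of the lemma, with the geometric-series constant absorbed into the leading factor under Condition~(R2). The estimate $o(n^\delta)$ then follows at once from $\nu_n = O(1)$ and $t = o(\log n)$, since $\nu_n^{t+1} \leq e^{O(t)} = e^{o(\log n)} = n^{o(1)}$.

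The main obstacle is the bookkeeping for non-generic configurations: those tuples in which some of $x_0, x_1, \dots, x_{s-1}, y_1, \dots, y_s$ coincide, those in which the required pairs $\{x_{i-1}, y_i\}$ are not mutually disjoint, and those realisations of $\eta$ in which $L_s^\eta(x) < N_s^\eta(x)$ because the exploration contains a cycle. All three effects are controlled by the same mechanism: each coincidence among $O(t) = o(\log n)$ half-edges drawn from a pool of $\ell = \Theta(n)$ costs a factor $1/\ell$, so the cumulative correction is of order $O(t^2/\ell) = o(1)$ and is safely absorbed into the $1+o(1)$ factor.
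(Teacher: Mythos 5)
Your proposal is correct and follows essentially the same route as the paper's proof: a first-moment path count in which each prescribed pairing costs $(1+o(1))/\ell$ and each level contributes a factor of $\nu_n$, yielding the $\nu_n^{t+1}$ bound, with the $o(n^\delta)$ conclusion then immediate from $\nu_n = O(1)$ and $t = o(\log n)$. The only cosmetic difference is bookkeeping — you enumerate non-backtracking walks and sibling pairs $(x_i,y_i)$ and invoke the double-factorial probability of a fixed matching, while the paper sums over self-avoiding paths to a fixed terminal half-edge $y$ and accumulates the degree factors step by step — and both arguments, like the statement itself, really only establish the upper bound, which is the only part used downstream.
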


\begin{proof}
We have
\begin{equation}
|B_t^\eta(x)| = \sum_{y\in H}\mathds{1}_{\{\dist_{\eta}(x,y) \leq t\}}.
\end{equation}
Putting this into the expectation, we get
\begin{equation}
\label{equ:expecball}
\expec(|B_t^\eta(x)|) = \frac{1}{\ell}\sum_{x,y\in H} \prob(\dist_{\eta}(x,y) \leq t).
\end{equation}
For fixed $x,y\in H$,
\begin{align}
\prob(\dist_{\eta}(x,y)\leq t) 
&\leq \sum_{d=1}^t\sum_{x_1,\dots,x_{d-1}\in H}\prob(xx_1\cdots x_{d-1}y
\text{ forms a self-avoiding path in }\eta) \nn \\
&\leq \sum_{d=1}^t\sum_{x_1,\dots,x_{d-1}\in H}\left(\prod_{j=1}^{d-1}\frac{\deg(x_j)}{\ell-2j+1}\right)
\frac{\deg(y)}{\ell-2d+1} \nn \\
&= \frac{\deg(y)}{\ell}\sum_{d=1}^t\left(\prod_{i=1}^d\frac{\ell}{\ell-2i+1}\right)
\sum_{x_1,\dots,x_{d-1}\in H}\left(\prod_{i=1}^{d-1}\frac{\deg(x_i)}{\ell}\right) \nn \\
&= \frac{\deg(y)}{\ell}\sum_{d=1}^t\left(\prod_{i=1}^d\frac{\ell}{\ell-2i+1}\right)
\left(\sum_{z\in H}\frac{\deg(z)}{\ell}\right)^{d-1}.
\end{align}
Since $t = o(\log n)$ and $\ell = \Theta(n)$, we have
\begin{equation}
\prob(\dist_{\eta}(x,y)\leq t) \leq [1+o(1)]\,\frac{\deg(y)}{\ell}\,(\nu_n)^t.
\end{equation}
Substituting this into~\eqref{equ:expecball}, we get
\begin{equation}
\expec(|B_t^\eta(x)|) \leq \frac{1+o(1)}{\ell}\sum_{x,y\in H} \frac{\deg(y)}{\ell}\,(\nu_n)^t 
= [1+o(1)]\,(\nu_n)^{t+1} = o(n^\delta),
\end{equation}
where the last equality follows from (R2) in Condition~\ref{cond-degree-reg} and the fact that 
$t = o(\log n)$.
\end{proof}

For the next result we will use an \emph{exploration process} to build the neighborhood of a uniformly 
chosen half-edge. (Similar exploration processes have been used in \cite{B-HS},\cite{BLPS} and 
\cite{LS}.) We explore the graph by starting from a uniformly chosen half-edge $x$ and building up 
the graph by successive uniform pairings, as explained in the procedure below. Let $\G(s)$ denote 
the \emph{thorny graph} obtained after $s$ pairings as follows (in our context, a thorny graph is a 
graph in which half-edges are not necessarily paired to form edges, as shown in Fig.~\ref{fig:Gpic}). 
We set $\G(0)$ to consist of $x$, its siblings, and the incident vertex $v(x)$. Along the way we keep 
track of \emph{the set of unpaired half-edges at each time $s$}, denoted by $U(s)\subset H$, and 
the so-called \emph{active} half-edges, $A(s) \subset U(s)$. We initialize $U(0) = H$ and $A(0) 
= \{x\}$. We build up the sequence of graphs $(\G(s))_{s\in\N_0}$ as follows:
\begin{enumerate}
\item 
At each time $s\in\N$, take the \emph{next} unpaired half-edge in $A(s-1)$, say $y$. Sample a half-edge 
uniformly at random from $H$, say $z$. If $z$ is already paired or $z = y$, then reject and sample again. 
Pair $y$ and $z$.
\item 
Add the newly formed edge $\{y,z\}$, the incident vertex $v(z)$ of $z$, and its siblings to $\G(s-1)$, to 
obtain $\G(s)$.
\item 
Set $U(s) = U(s-1)\setminus\{y,z\}$, i.e., remove $y,z$ from the set of unpaired half-edges, and set 
$A(s) = A(s-1)\cup\{H(v(z))\}\setminus\{y,z\}$, i.e., add siblings of $z$ to the set of active half-edges 
and remove the active half-edges just paired.
\end{enumerate}
This procedure stops when $A(s)$ is empty. We think of $A(s)$ as a first-in first-out queue. So, when 
we say that we pick the \emph{next} half-edge in Step 1, we refer to the half-edge on top of the 
queue, which ensures that we maintain the breadth-first order. The rejection sampling used in Step 1 
ensures that the resulting graph is distributed according to the configuration model. This procedure 
eventually gives us the connected component of $x$ in $\eta$, the part of the graph that can be 
reached from $x$ by a non-backtracking walk, where $\eta$ is distributed uniformly on $\Conf_H$. 

\begin{figure}[htbp]
\centering
\includegraphics[width=0.5\textwidth]{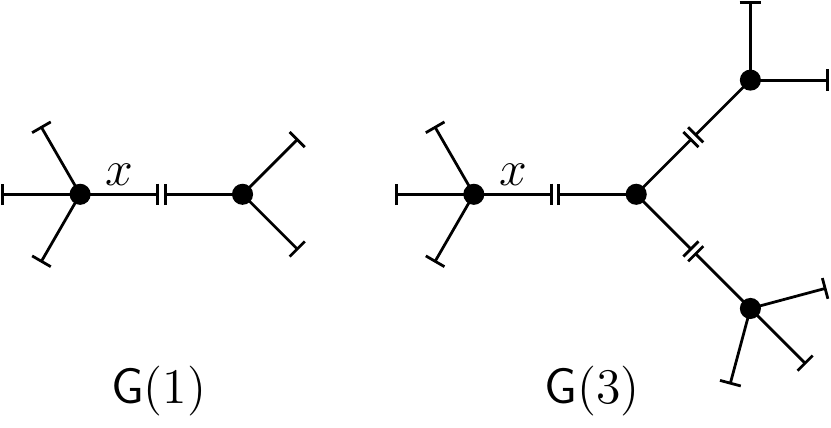}
\caption{\small Example snapshots of $\G(s)$ at times $s=1$ and $s=3$.}
\label{fig:Gpic}
\end{figure}

\begin{lemma}[{\bf Tree-like neighborhoods}]
\label{lem:probtreeball}
Suppose that $s = s(n) = o(n^{(1-2\delta)/2})$ for some $\delta \in (0, \tfrac12)$. Then $\G(s)$ is a tree with 
probability $1-o(n^{-\delta})$.
\end{lemma}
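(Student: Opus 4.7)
The plan is to set up a union bound over the $s$ steps of the exploration procedure for ``collision'' events, where a collision at step $s'$ is the event that the half-edge $z$ sampled in Step~1 is incident to a vertex already present in $\G(s'-1)$. By construction, $\G(s)$ fails to be a tree if and only if at least one such collision occurs during the $s$ steps, so it suffices to bound the probability of a first collision at each step and sum.

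Conditionally on the filtration $\mathcal{F}_{s'-1}$ generated by the exploration up to step $s'-1$, the rejection procedure in Step~1 makes $z$ uniformly distributed on the set $U(s'-1)\setminus\{y\}$ of unpaired half-edges different from the active one, of cardinality $\ell-2s'+1$. Denoting by $v_0,v_1,\dots,v_{s'-1}$ the vertices revealed at steps $0,1,\dots,s'-1$, the set of unpaired half-edges incident to $\G(s'-1)$ is contained in $\bigcup_{i=0}^{s'-1}H(v_i)$, so
\begin{equation}
\prob\bigl(\text{collision at step }s'\bigm|\mathcal{F}_{s'-1}\bigr)\leq \frac{\sum_{i=0}^{s'-1}d(v_i)}{\ell-2s'+1}.
\end{equation}

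To bound the expectation of the numerator, I exploit the size-biased structure of the exploration. Since $v_0=v(x)$ with $x$ uniform on $H$, one has $\expec[d(v_0)]=\sum_v d(v)^2/\ell=\nu_n+1$ by the definition of $\nu_n$. For $i\geq 1$, conditionally on $\mathcal{F}_{i-1}$ and on the event that $v_i$ is a fresh (unexplored) vertex, $v_i$ equals a given unexplored $v$ with probability $d(v)/(\ell-2i+1)$, whence
\begin{equation}
\expec\bigl[d(v_i)\,\mathds{1}_{\{v_i\text{ fresh}\}}\bigm|\mathcal{F}_{i-1}\bigr]\leq \frac{\sum_v d(v)^2}{\ell-2i+1}=[1+o(1)](\nu_n+1),
\end{equation}
using (R1), (R2) and $s=o(n)$. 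Since the event $\{\G(s'-1)\text{ is a tree}\}$ entails that all $v_0,\dots,v_{s'-1}$ are fresh, iterating gives $\expec\bigl[\mathds{1}_{\{\G(s'-1)\text{ is a tree}\}}\sum_{i=0}^{s'-1}d(v_i)\bigr]=O(s')$ by (R2).

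Combining these estimates and summing over $s'\in[1,s]$:
\begin{equation}
\prob(\G(s)\text{ is not a tree})\leq \sum_{s'=1}^s\prob(\text{first collision at step }s')\leq \sum_{s'=1}^s \frac{O(s')}{\ell-2s'+1}=O(s^2/n),
\end{equation}
using $\ell=\Theta(n)$ by (R1). From $s=o(n^{(1-2\delta)/2})$ it follows that $s^2/n=o(n^{-2\delta})$, which is also $o(n^{-\delta})$ for $\delta>0$, as required. The main obstacle is the second step: Condition~\ref{cond-degree-reg} imposes no pointwise bound on the maximum degree, so the degrees of the explored vertices cannot be controlled individually and must be averaged via size-biasing and (R2). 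The fact that the rejection in Step~1 keeps the conditional law of $z$ uniform on the unpaired half-edges is what makes the size-biased iteration work cleanly along the entire exploration.
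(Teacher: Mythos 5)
Your proof is correct and runs on the same first-moment engine as the paper's: a union bound over the $s$ exploration steps, with the per-step collision probability controlled by the degree sum of the explored part of the graph, and that degree sum in turn controlled through the size-biased second moment $\nu_n$ from (R2). The technical route is somewhat different, though. The paper introduces i.i.d.\ copies $Y_i$ of the size-biased degree to stochastically dominate the number of half-edges dangling off $\G(u)$, bounds $\prob(\G(s)\text{ not a tree}\mid (Y_i)_{i\le s})$ by $\bigl(\sum_i(s-i+1)Y_i+s(s-1)/2\bigr)/\ell$, and then applies Markov's inequality to the event $\{s\sum_i Y_i<n^{1-\delta}\}$ before combining. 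You instead compute the expected number of first collisions directly: on $\{\G(s'-1)\text{ is a tree}\}$ all explored vertices are fresh, so $\expec\bigl[d(v_i)\mathds{1}_{\{v_i\text{ fresh}\}}\mid\mathcal{F}_{i-1}\bigr]\le[1+o(1)](\nu_n+1)$, giving $\prob(\text{first collision at }s')\le O(s')/(\ell-2s'+1)$ and hence $O(s^2/n)=o(n^{-2\delta})$. This bypasses the intermediate conditioning on $(Y_i)$ and the Markov step, and in fact yields a slightly sharper estimate than the $o(n^{-\delta})$ claimed. Two small remarks: your bound $\sum_{i=0}^{s'-1}d(v_i)$ in the numerator counts already-paired half-edges that $z$ cannot hit, but this over-counting only helps you; and the rejection-sampling remark is indeed the key structural fact that keeps the conditional law of $z$ exactly uniform on the unpaired half-edges, which both proofs rely on.
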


\begin{proof}
Let $F$ be the first time the uniform sampling of $z$ in Step 1 fails at the first attempt, or $z$ is a 
sibling of $x$, or $z$ is in $A(s-1)$. Thus, at time $F$ we either choose an already paired half-edge 
or we form a cycle by pairing to some half-edge already present in the graph. We have
\begin{equation}
\prob(\G(s)\text{ is not a tree}) \leq \prob(F \leq s).
\end{equation}
Let $Y_i$, $i \in \N$, be i.i.d.\ random variables whose distribution is the same as the distribution of 
the degree of a uniformly chosen half-edge. When we form an edge before time $F$, we use one 
of the unpaired half-edges of the graph, and add new unpaired half-edges whose number is distributed 
as $Y_1$. Hence the number of unpaired half-edges in $\G(u)$ is stochastically dominated by 
$\sum_{i=1}^{u+1}Y_i-u$, with one of the $Y_i$'s coming from $x$ and the other ones coming from 
the formation of each edge. Therefore the probability that one of the conditions of $F$ will be met 
at step $u$ is stochastically dominated by $(\sum_{i=1}^{u}Y_i + u -2)/\ell$. We either choose an 
unpaired half-edge in $\G(u)$ or we choose a half-edge belonging to an edge in $\G(u)$, and by 
the union bound we have
	\eqan{
	&\prob(\G(s)\text{ is not a tree}\mid (Y_i)_{i\in[1,s]}) \leq \prob(F\leq s\mid (Y_i)_{i\in[1,s]}) \nn \\
	&\leq \frac{\sum_{u=1}^{s} \sum_{i=1}^{u} (Y_i + u -2)}{\ell}
	=\frac{\sum_{i=1}^{s} (s-i+1)Y_i + s(s-1)/2}{\ell}.
	}
Since $\expec(Y_1) = \nu_n = O(1)$ and $s = o(n^{(1-2\delta)/2})$, via the Markov inequality we get
that, with probability at least $1-o(n^{-\delta})$,
	\begin{equation}
	s\sum_{i=1}^{s} Y_i < n^{1-\delta}.
	\end{equation}
Combining this with the bound given above and the fact that $\ell=\Theta(n)$, we arrive at
\begin{equation}
\prob(\G(s)\text{ is not a tree}) = o(n^{-\delta}). 
\end{equation}
\end{proof}

To further prepare for the proof of the lower bound in \eqref{equ:taustoppedtvlowerbound} and 
Proposition~\ref{prop:mainprop} in Section~\ref{mainProof}, we introduce one last ingredient. 
For $x\in H$ and $\eta\in\Conf_H$, we denote by $\bar{B}_t^\eta(x)$ the set of half-edges from 
which there is a non-backtracking path to $x$ of length at most $t$. For fixed $t\in\N$, 
$T = \{t_1,\dots,t_r\} \subseteq [1,t]$ and $\eta\in\Conf_H$, we say that an $(r+1)$-tuple 
$(x_0,x_1,\dots,x_r)$ is \emph{good} for $T$ in $\eta$ if it satisfies the following two properties:
\begin{enumerate}
\item 
$B_{t_{j}-t_{j-1}}^\eta(x_j)$ is a tree for $j \in [1,r]$ with $t_0 = 0$, and $\bar{B}_{t-t_r}^\eta(x_r)$ 
is a tree.
\item 
The trees $B_{t_{j}-t_{j-1}}^\eta(x_j)$ for $j \in [1,r]$ and $\bar{B}_{t-t_r}^\eta(x_r)$ are all 
disjoint.
\end{enumerate}
For a good $(r+1)$-tuple all the segmented paths, such that the $i$th segment starts from $x_{i-1}$ 
and is of length $t_i-t_{i-1}$ for $i\in[1,r]$ and the $(r+1)$st segment ends at $x_r$ and is of 
length $t-t_r$, are self-avoiding by the tree property. The next lemma states that whp in $\eta$ 
almost all $(r+1)$-tuples are good. We denote by $N_t^\eta(T)$ the set of $(r+1)$-tuples that are 
good for $T$ in $\eta$, and let $N_t^\eta(T)^c$ be the complement of $N_t^\eta(T)$. We have the 
following estimate on $|N_t^\eta(T)|$:

\begin{lemma}[{\bf Estimate on good paths}]
\label{lem:sizenicetuples}
Suppose that $t = t(n) = o(\log n)$. Then there exist $\bar\delta>0$ such that whp in $\eta$ for all 
$T\subseteq[1,t]$,
\begin{equation}
\label{nt}
|N_t^\eta(T)| = (1-n^{-\bar\delta})\ell^{|T|+1}.
\end{equation}
\end{lemma}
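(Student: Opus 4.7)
The plan is to bound the expected number of bad tuples (under the uniform law on $\Conf_H$ and $H^{r+1}$) and then invoke Markov's inequality together with a union bound over the at most $2^t = n^{o(1)}$ choices of $T$. Fix $T=\{t_1<\cdots<t_r\}\subseteq[1,t]$ and set $s_j := t_j-t_{j-1}$ for $j\in[1,r]$ and $s_{r+1} := t-t_r$, so that $s_1+\cdots+s_{r+1}=t$. A tuple $(x_0,\dots,x_r)$ fails to be good if either (a) at least one of the balls $B_{s_j}^\eta(x_j)$ (for $j\in[1,r]$) or $\bar B_{s_{r+1}}^\eta(x_r)$ contains a cycle, or (b) two of these balls intersect. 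I will bound the uniform probability of (a) and (b) separately.

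For (a), I would combine Lemma~\ref{lem:expecballsize} with Lemma~\ref{lem:probtreeball}. Pick a small $\delta_1\in(0,\tfrac12)$. Since $\mathbb{E}(|B_t^\eta(x)|)=n^{o(1)}$, Markov's inequality gives $\prob(|B_t^\eta(x)|>n^{\delta_1})= o(n^{-\delta_1/2})$. On the complementary event, the BFS exploration described before Lemma~\ref{lem:probtreeball}, started from $x$ and stopped once $B_t^\eta(x)$ is revealed, performs at most $n^{\delta_1}$ pairings. Choosing $\delta_2\in(0,\tfrac12-\delta_1)$ and applying Lemma~\ref{lem:probtreeball} with $s=n^{\delta_1}=o(n^{(1-2\delta_2)/2})$ shows that the explored subgraph, and hence $B_t^\eta(x)$, is a tree with probability $1-o(n^{-\delta_2})$. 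The same reasoning applies to $\bar B^\eta$, since the law of the ingoing ball coincides with that of an outgoing ball by the symmetry of the configuration model. Taking a union bound over the $r+1\le t+1=n^{o(1)}$ balls yields $\prob(\text{(a)})=o(n^{-\delta_3})$ for some fixed $\delta_3>0$.

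For (b), I would use the fact that two balls of radii $s$ and $s'$ centred at $x$ and $x'$ intersect only when $\mathrm{dist}_\eta(x,x')\le s+s'$, and estimate directly
\begin{equation}
\prob(\mathrm{dist}_\eta(x,x')\le t)=\frac{1}{\ell^2}\sum_{x,x'\in H}\prob(\mathrm{dist}_\eta(x,x')\le t)=\frac{\mathbb{E}(|B_t^\eta(x)|)}{\ell}=o(n^{-1+\delta}),
\end{equation}
where the expectation bound is Lemma~\ref{lem:expecballsize}. Summing over the $\binom{r+1}{2}=n^{o(1)}$ pairs of balls yields $\prob(\text{(b)})=o(n^{-1+\delta+o(1)})$, which is dominated by the bound for (a). Combining, the uniform probability that $(x_0,\dots,x_r)$ is bad is $o(n^{-\delta_4})$ for some $\delta_4>0$, and hence
\begin{equation}
\mathbb{E}_\eta\bigl(|N_t^\eta(T)^c|\bigr)\;\leq\;\ell^{r+1}\cdot o(n^{-\delta_4}).
\end{equation}

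Finally, by Markov's inequality, for $\bar\delta:=\delta_4/3$ we have
\begin{equation}
\prob_\eta\bigl(|N_t^\eta(T)^c|>n^{-\bar\delta}\,\ell^{r+1}\bigr)=o(n^{-2\delta_4/3}).
\end{equation}
Union-bounding over the $2^t=n^{o(1)}$ choices of $T\subseteq[1,t]$ keeps the total failure probability $o(1)$, which gives \eqref{nt} with the same $\bar\delta$, uniformly in $T$. The main obstacle, and the only delicate step, is calibrating the exponents $\delta_1,\delta_2,\delta_3,\delta_4,\bar\delta$ when passing from the bound on $\mathbb{E}(|B_t^\eta(x)|)$ in Lemma~\ref{lem:expecballsize} to a polynomial rate for the non-tree event through Lemma~\ref{lem:probtreeball}; once a single polynomial rate is obtained, the union bound over $T$ is harmless since $t=o(\log n)$.
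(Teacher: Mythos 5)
Your proposal is correct and follows essentially the same route as the paper: a first-moment/Markov bound on the number of bad tuples for each fixed $T$, a decomposition of the bad event into (non-tree ball) and (intersecting balls) parts controlled via Lemmas~\ref{lem:expecballsize} and~\ref{lem:probtreeball}, and a final union bound over the $2^t = n^{o(1)}$ choices of $T$. The only cosmetic difference is that the paper fixes a single $\delta$ and tracks explicit polynomial rates rather than introducing a chain of $\delta_i$'s, but the calibration you describe is exactly the calculation the paper carries out.
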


\begin{proof}
Fix $\varepsilon > 0$ and $T\subseteq[1,t]$ with $|T|=r$. We want to show that whp $|N_t^\eta(T)^c| 
\leq \varepsilon\ell^{r+1}$. By the Markov inequality, we have
\begin{equation}
\prob(|N_t^\eta(T)^c| > \varepsilon\ell^{r+1}) \leq \frac{\expec(|N_t^\eta(T)^c|)}{\varepsilon\ell^{r+1}} 
= \frac{\prob(Z_{[0,r]}\in N_t^\eta(T)^c)}{\varepsilon},
\end{equation}
where $Z_0,\dots,Z_r$ are i.i.d.\ uniform half-edges and we use that $1/\ell^{r+1}$ is the uniform 
probability over a collection of $r+1$ half-edges. Let $B_{i-1} = B_{t_{i}-t_{i-1}}^\eta(Z_{i-1})$ 
for $i \in [1,r]$ and $B_r = B_{t-t_r}^\eta(Z_r)$. By the union bound,
\begin{align}
\prob\big(Z_{[0,r]} \in N_t^\eta(T)^c\big) 
&\leq \sum_{i = 0}^r\prob(B_i\text{ is not a tree}) + \sum_{i,j=0}^r\prob(B_i\cap B_j\neq\varnothing).
\end{align}

By Lemma~\ref{lem:expecballsize} and since $t = o(\log n)$, for any $0<\delta<\tfrac12$ we have 
$\expec|B_i| = o(n^\delta)$, and so by the Markov inequality $|B_i| = o(n^{(1-2\delta)/2})$ with 
probability $1-o(n^{-\delta})$. Hence, by Lemma~\ref{lem:probtreeball} and since $\ell = \Theta(n)$, 
for $i \in [1,r]$, we have
\begin{equation}
\prob(B_{i-1}\text{ is not a tree}) = o(n^{-\delta}).
\end{equation}
Again using Lemma~\ref{lem:expecballsize}, we see that for any $i,j\in[1,r]$,
\begin{align}
\prob(B_i\cap B_j\neq\varnothing) \leq \prob(Z_j\in B_t^\eta(Z_i)) 
= \frac{\expec(|B_t^\eta(Z_i)|)}{\ell} \leq o(n^{\delta-1}).
\end{align}
Since $r \leq t = o(\log n)$, setting $\bar\delta = 2\delta/3$ and taking $\varepsilon = n^{-\delta}$, 
we get
\begin{equation}
\prob(|N_t^\eta(T)^c| > \varepsilon\ell^{r+1}) \leq \frac{rn^{-\bar\delta} + r^2n^{\bar\delta-1}}{\varepsilon} 
= o(n^{-\delta/4})
\end{equation}
uniformly in $T\subseteq[1,t]$. Since there are $2^t$ different $T\subseteq[1,t]$ and $2^t = 2^{o(\log n)} 
= o(n^{\delta/8})$, taking the union bound we see that~\eqref{nt} holds for all $T\subseteq[1,t]$ with 
probability $1-o(n^{-\delta/8})$.
\end{proof}


\section{Closeness to stationarity and tail behavior of stopping time}
\label{mainProof}

We are now ready to prove the lower bound in~\eqref{equ:taustoppedtvlowerbound} and 
Proposition~\ref{prop:mainprop}. Before giving these proofs, we need one more lemma, for 
which we introduce some new notation. For fixed $t\in\N$, $T\subseteq[1,t]$ with $|T| = r >0$, 
$\eta\in\Conf_H$ and $x,y\in H$, let $N_t^\eta(x,y;T)$ denote the set of $(r-1)$-tuples such 
that $(x,x_1,\dots,x_{r-1},y)$ is good for $T$ in $\eta$. Furthermore, for a given $(r+1)$-tuple 
$\mathbf{x} = (x,x_1,\dots,x_{r-1},y)$ that is good for $T$ in $\eta$, let $\SP_t^\eta(\mathbf{x};T)$ 
denote the set of all segmented paths in which the $i$th segment starts at $x_{i-1}$ and is of 
length $t_i-t_{i-1}$ for $i\in[1,r]$ with $x_0 = x$ and $t_0 = 0$, and the $(r+1)$st segment ends 
at $y$ and is of length $t-t_r$. By the definition of a good tuple, these paths are self-avoiding, 
and hence $\SP_t^\eta(\mathbf{x};T) \subset \SP_t^\eta(x,y;T)$.

\begin{lemma}[{\bf Total mass of relevant paths}]
\label{lem:lowerboundsumsspaths}
Suppose that $t = t(n) = o(\log n)$. Then whp in $\eta$ and $x,y$ for all $T\subseteq [1,t]$, 
\begin{equation}
\sum_{x_{[0,t]}\in\SP_t^\eta(x,y;T)}\prob_{\eta,x}\big(X_{[1,t]} = x_{[1,t]}\mid A(x_{[0,t]};T)\big) 
\geq \frac{1-o(1)}{\ell}.
\end{equation}
\end{lemma}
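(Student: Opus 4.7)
The plan is to restrict the sum on the left-hand side to segmented paths whose \emph{skeleton} $(x,x_1,\dots,x_{r-1},y)$ of segment-start half-edges together with the fixed endpoint is a good $(r+1)$-tuple, lower bound each path probability via Lemma~\ref{lem:probsinglepath}, factorise the resulting weighted sum segment by segment using (double) stochasticity of the non-backtracking transition matrix $P_\eta$, and finally count the good tuples by combining Lemma~\ref{lem:sizenicetuples} with a Markov inequality and a union bound over $T$. I treat the case $|T|=r\geq 1$; the case $T=\varnothing$ is not needed in the application to~\eqref{equ:taustoppedtvlowerbound}.

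For any good tuple $\mathbf{x}\in N_t^\eta(x,y;T)$, the tree and disjointness properties in the definition of \emph{good} make every path in $\SP_t^\eta(\mathbf{x};T)$ self-avoiding and make the families $\SP_t^\eta(\mathbf{x};T)$ pairwise disjoint as $\mathbf{x}$ varies, so
\[
\bigsqcup_{\mathbf{x}\in N_t^\eta(x,y;T)}\SP_t^\eta(\mathbf{x};T)\ \subseteq\ \SP_t^\eta(x,y;T).
\]
Applying Lemma~\ref{lem:probsinglepath} inside each term, the weighted sum $\sum_{x_{[0,t]}\in\SP_t^\eta(\mathbf{x};T)}\prod_{i\in[1,t]\setminus T}\tfrac{1}{\deg(x_i)}$ factorises over the $r+1$ segments. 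For each of the segments $1,\dots,r$ the starting half-edge is pinned and the remaining half-edges are free, so a step-by-step telescoping using $\sum_z P_\eta(w,z)=1$ for every $w\in H$ (valid by (R3)) makes the segment-sum equal to $1$. For segment $r+1$ the endpoint $y$ is pinned while the starting half-edge $x_{t_r}$ is free, and the segment-sum rewrites as $\sum_{x_{t_r}}P_\eta^{\,t-t_r}(x_{t_r},y)=1$ by \emph{double} stochasticity of $P_\eta$. Consequently, for every good tuple,
\[
\sum_{x_{[0,t]}\in\SP_t^\eta(\mathbf{x};T)}\prob_{\eta,x}\bigl(X_{[1,t]}=x_{[1,t]}\,\big|\,A(x_{[0,t]};T)\bigr)\ \geq\ \frac{1-o(1)}{\ell^r}.
\]

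To count good tuples uniformly in $T$, I would start from Lemma~\ref{lem:sizenicetuples}, which yields $|N_t^\eta(T)^c|\leq n^{-\bar\delta}\ell^{r+1}$ whp in $\eta$ for every $T\subseteq[1,t]$. Since $\sum_{x,y}|N_t^\eta(x,y;T)^c|=|N_t^\eta(T)^c|$, Markov's inequality shows that the fraction of pairs $(x,y)$ with $|N_t^\eta(x,y;T)^c|>n^{-\bar\delta/2}\ell^{r-1}$ is at most $n^{-\bar\delta/2}$; a union bound over the $2^t=2^{o(\log n)}$ subsets $T$ then gives, whp in $\eta$ and in $(x,y)$, the inequality $|N_t^\eta(x,y;T)|\geq(1-o(1))\,\ell^{r-1}$ simultaneously for every such $T$. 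Summing the per-tuple lower bound displayed above then yields
\[
\sum_{x_{[0,t]}\in\SP_t^\eta(x,y;T)}\prob_{\eta,x}\bigl(X_{[1,t]}=x_{[1,t]}\,\big|\,A(x_{[0,t]};T)\bigr)\ \geq\ \frac{(1-o(1))\,|N_t^\eta(x,y;T)|}{\ell^r}\ \geq\ \frac{1-o(1)}{\ell}.
\]

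I expect the main obstacle to be the clean treatment of segment $r+1$: because its endpoint (and not its starting half-edge) is pinned, a plain forward telescoping is not available, and one must reinterpret the segment-sum as a column sum of $P_\eta^{\,t-t_r}$ and invoke double stochasticity. It also has to be checked that restricting the segment-$r+1$ summation to paths lying inside the tree $\bar B_{t-t_r}^\eta(y)$ discards nothing, which follows because any non-backtracking path of length $t-t_r$ ending at $y$ is automatically forced to use only half-edges in $\bar B_{t-t_r}^\eta(y)$, and the tree-disjointness in the definition of good prevents cross-segment collisions. A secondary technicality is that the error in the counting step must be $o(1)$ uniformly over all $2^{o(\log n)}$ admissible $T$, for which the polynomial error $n^{-\bar\delta}$ provided by Lemma~\ref{lem:sizenicetuples} is amply sufficient.
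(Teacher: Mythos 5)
Your argument is correct and matches the paper's proof: restrict to good $(r+1)$-tuples, invoke Lemma~\ref{lem:probsinglepath} per path, telescope the inner sum segment-by-segment (with the last segment handled via double stochasticity of $P_\eta$, which is the paper's observation that $U_H$ is stationary), and close with Lemma~\ref{lem:sizenicetuples} plus a counting/union-bound step. You are slightly more explicit than the paper in two places — noting the disjointness of the families $\SP_t^\eta(\mathbf{x};T)$ so the restriction is indeed a sub-sum, and carrying out the Markov-plus-union-bound passage from the global count $|N_t^\eta(T)|$ to the per-pair count $|N_t^\eta(x,y;T)|$ with an explicit $n^{-\bar\delta/2}$ split — but these are refinements of the same argument, not a different route.
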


\begin{proof}
By Lemma~\ref{lem:sizenicetuples}, the number of pairs of half-edges $x,y$ for which $|N_t^\eta(x,y;T)| 
\geq (1-n^{-\bar\delta})\ell^{|T|-1} = [1-o(1)]\,\ell^{|T|-1}$ for all $T\in[1,t]$ is at least $(1-2^tn^{-\bar\delta})
\ell^2 = [1-o(1)]\,\ell^2$ whp in $\eta$. Take such a pair $x,y\in H$, and let $r = |T|$. By 
Lemma~\ref{lem:probsinglepath} and the last observation before the statement of 
Lemma~\ref{lem:lowerboundsumsspaths}, we have
\begin{align}
&\sum_{x_{[0,t]}\in\SP_t^\eta(x,y;T)}\prob_{\eta,x}\big(X_{[1,t]} = x_{[1,t]}\mid A(x_{[0,t]};T)\big) \nn \\
&\qquad \geq \sum_{\mathbf{x}\in N_t^\eta(x,y;T)}\sum_{y_0\dots y_t\in\SP_t^\eta(\mathbf{x},T)}
\frac{1-o(1)}{\ell^r}\prod_{i\in[1,t]\setminus T}\frac{1}{\deg(y_i)}.
\end{align}

We analyze at the second sum by inspecting the contributions coming from each segment separately. 
For fixed $\mathbf{x}\in  N_t^\eta(x,y;T)$, when we sum over the segmented paths in $\SP_t^\eta
(\mathbf{x},T)$, we sum over all paths that go out of $x_{i-1}$ of length $t_{i}-t_{i-1}$ for $i\in[1,r]$. 
Since $\prod_{j=t_{i-1}+1}^{t_{i}-1}\tfrac{1}{\deg(y_j)}$ is the probability that the random walk without
backtracking follows this path on the static graph given by $\eta$ starting from $x_{i-1}$, when we 
sum over all such paths the contribution from these terms sums up to 1 for each $i \in[1,r]$, i.e., the 
contributions of the first $r$ segments coming from the products of inverse degrees sum up to 1.  
For the last segment we sum, over all paths going into $y$, the probability that the random walk 
without backtracking on the static graph given by $\eta$ follows the path. Since the uniform distribution 
is stationary for this random walk, the sum over the last segment of the probabilities $\tfrac{1}{\ell}
\prod_{j= t_r+1}^t\tfrac{1}{\deg(y_j)}$ gives us $1/\ell$. With this observation, using that $|N_t^\eta(x,y;T)|
\geq (1-o(1))\ell^{r-1}$, we get
\begin{align}
&\sum_{x_{[0,t]}\in\SP_t^\eta(x,y;T)}\prob_{\eta,x}\big(X_{[1,t]} = x_{[1,t]}\mid A(x_{[0,t]};T)\big) \nn \\
&\qquad \geq \frac{1-o(1)}{\ell}\sum_{\mathbf{x}\in N_t^\eta(x,y;T)}\frac{1-o(1)}{\ell^{r-1}} = \frac{1-o(1)}{\ell},
\end{align}
which is the desired result.
\end{proof}

\begin{proof}[$\bullet$ Proof of~\eqref{equ:stoppedtv}]
For any self-avoiding segmented path $x_0\cdots x_t$, we have $|x_{[s,s']}| = s'-s+1$ for all $1\leq s<s'\leq t$. 
By Lemma~\ref{lem:equalpathprobs}, the probability $\prob_{\eta,x}(A(x_{[0,t]};T))$ depends on $\eta$ 
and $T$ only, and we can write $\prob_{\eta,x}(A(x_{[0,t]};T)) = p_t^\eta(T)$ for any $xx_1\cdots x_{t-1}y\in
\SP_t^\eta(x,y;T)$. Applying Lemma~\ref{lem:lowerboundsumsspaths}, we get
\begin{align}
&\prob_{\eta,x}(X_t = y,\tau\leq t)\\ \nn
&\geq \sum_{r=1}^t\sum_{\substack{T\subseteq [1,t]\\|T|=r}}
\sum_{x_{[0,t]}\in\SP_t^\eta(x,y;T)} \prob_{\eta,x}\big(X_{[1,t]} = x_{[1,t]}\mid A(x_{[0,t]};T)\big)
\,\prob_{\eta,x}\big(A(x_{[0,t]};T)\big) \\ \nn
&\geq \frac{1-o(1)}{\ell}\sum_{r=1}^t\sum_{\substack{T\subseteq [1,t]\\|T|=r}}p_t^\eta(T).
\end{align}
If the $t$-neighborhood of $x$ in $\eta$ is a tree, then all $t$-step non-backtracking paths starting at $x$ 
are self-avoiding. (Here is a place where the non-backtracking nature of our walk is crucially used!)
In particular, for any such path $xx_1\cdots x_t$ we have $\prob_{\eta,x}(A(x_{[0,t]};\varnothing)) 
= p_t^\eta(\varnothing)$. Denoting by $\Gamma_t^\eta(x)$ the set of paths in $\eta$ of length $t$ 
that start from $x$, we also have
\begin{align}
\prob_{\eta,x}(\tau > t) 
&= \sum_{x_0\cdots x_t\in\Gamma_t^\eta(x)}\prob_{\eta,x}\big(X_{[1,t]} = x_{[1,t]}, A(x_{[0,t]};\varnothing)\big) \nn \\
&= \sum_{x_0\cdots x_t\in\Gamma_t^\eta(x)}\prod_{i=1}^t \frac{1}{\deg(x_i)}\,p_t^\eta(\varnothing) = p_t^\eta(\varnothing),
\end{align}
since the product $\prod_{i=1}^t \frac{1}{\deg(x_i)}$ is the probability that a random walk without 
backtracking in the static $\eta$ follows the path $x_0x_1\cdots x_t$, and we take the sum over 
all paths going out of $x$.

For a fixed path $x_0x_1\cdots x_t$, we have
\begin{align}
\sum_{r=1}^t\sum_{\substack{T\subseteq [1,t]\\|T|=r}}\prob_{\eta,x}\big(A(x_{[0,t]};T)\big) 
= 1 - \prob_{\eta,x}\big(A(x_{[0,t]};\varnothing)\big).
\end{align}
So, when the $t$-neighborhood of $x$ in $\eta$ is a tree, we have
\begin{align}
\sum_{r=1}^t\sum_{\substack{T\subseteq [1,t]\\|T|=r}}p_t^\eta(T) 
= 1 - p_t^\eta(\varnothing) = 1 - \prob_{\eta,x}(\tau > t) = \prob_{\eta,x}(\tau \leq t),
\end{align}
which gives
\begin{equation}
\label{BOUND}
\prob_{\eta,x}(X_t = y,\tau\leq t) \geq \frac{1-o(1)}{\ell}\,\prob_{\eta,x}(\tau \leq t)
\end{equation}
and settles the lower bound~\eqref{equ:taustoppedtvlowerbound}. Since the latter holds whp in $\eta$ 
and $x,y$, we have that the number of $y$ for which this holds is $[1-o(1)]\,\ell$ whp in $\eta$ and $x$. 
Denoting the set of $y \in H$ for which the lower bound in~\eqref{equ:taustoppedtvlowerbound} holds 
by $N_t^\eta(x)$, we get that whp in $\eta$ and $x$,
\begin{align}
\|\prob_{\eta,x}(X_t\in\cdot\mid\tau\leq t) - U_H(\cdot)\|_{\TV} 
&= \sum_{y\in H}\left[\frac{1}{\ell} - \prob_{\eta,x}(X_t=y\mid\tau\leq t)\right]^+ \nn \\
&\leq \sum_{y\in N_t^\eta(x)}\left[\frac{1}{\ell} - \frac{1-o(1)}{\ell}\right]^+ 
+ \sum_{y\not\in N_t^\eta(x)}\frac{1}{\ell} = o(1),
\end{align}
which is~\eqref{equ:stoppedtv}.
\end{proof}

\begin{proof}[$\bullet$ Proof of~\eqref{equ:unstoppedtv}]
First note that $\prob_{\eta,x}(X_t\in B_t^\eta(x)\mid \tau >t) = 1$. On the other hand, using 
Lemma~\ref{lem:expecballsize} and the Markov inequality, we see that $U_H(B_t^\eta(x)) 
= |B_t^\eta(x)|/\ell = o(1)$ whp in $\eta$ and $x$, and so we get
\begin{equation}
\|\prob_{\eta,x}(X_t\in\cdot \mid \tau > t) - U_H(\cdot)\|_{\TV} 
\geq \prob_{\eta,x}(X_t\in B_t^\eta(x)\mid \tau >t) - U_H(B_t^\eta(x)) = 1-o(1).
\end{equation} 
\end{proof}

\begin{proof}[$\bullet$ Proof of \eqref{equ:tautailprob}]
Taking $T = \varnothing$ in Lemma~\ref{lem:sizenicetuples}, we see that $B_t^\eta(x)$ is a tree whp 
in $\eta$ and $x$, so each path in $\eta$ of length $t$ that goes out of $x$ is self-avoiding. By looking 
at pathwise probabilities, we see that
\begin{align}
\label{equ:tautailprobsum}
\prob_{\eta,x}(\tau >t) = \sum_{x_0\cdots x_t\in\Gamma_t^\eta(x)}\
\prob_{\eta,x}\big(X_{[1,t]} = x_{[1,t]}, x_{i-1}\not\in R_{\leq i}\,\forall\,i\in[1,t]\big).
\end{align}
Since the event $\{x_{i-1}\not\in R_{\leq i}\,\forall\,i\in[1,t]\}$ implies that the edge involving $x_{i-1}$ 
is open a time $i$, 
\begin{align}
\label{equ:singlepathprob}
\prob_{\eta,x}\big(X_{[1,t]} = x_{[1,t]}\mid x_{i-1}\not\in R_{\leq i}\,\forall\,i\in[1,t]\big) 
= \prod_{i=1}^t \frac{1}{\deg(x_i)}.
\end{align}
Next, let us look at the probability $\prob_{\eta,x}(x_i\not\in R_{\leq i}\,\forall\,i\in[1,t])$. 
By rearranging and conditioning, we get
\begin{align}
&\prob_{\eta,x}\big(x_{i-1}\not\in R_{\leq i}\,\forall\,i\in[1,t]\big) 
= \prob_{\eta,x}\big(x_j\not\in R_i\,\forall\,j\in[i-1,t-1]\,\forall\,i\in[1,t]\big) \nn \\
&\qquad= \prod_{i=1}^t \prob_{\eta,x}\Big(x_j\not\in R_i\,\forall j\in[i-1,t-1] 
~\Big|~ x_k\not\in R_j\,\forall\,k\in[j-1,t-1]\,\forall\,j\in[1,i-1]\Big).
\end{align}
Observe that, on the event $\{x_k\not\in R_j\,\forall\,k\in[j-1,t-1\,\forall\,j\in[1,i-1]\}$, the path 
$x_{i-1}\cdots x_{t-1}$ has not rewired until time $i-1$, and so the number of edges given by 
these half-edges is $t-i+1$, since it was originally a self-avoiding path. With this we see that 
for any $i\in[1,t]$,
\begin{equation}
\prob_{\eta,x}\Big(x_j\not\in R_i\,\forall j\in[i-1,t-1] ~\Big|~ 
x_k\not\in R_j\,\forall\,k\in[j-1,t-1]\,\forall\,j\in[1,i-1]\Big) 
= \frac{\binom{m - t + i -1}{k}}{\binom{m}{k}}, 
\end{equation}
and hence
\begin{align}
\prob_{\eta,x}\big(x_{i-1}\not\in R_{\leq i}\,\forall\,i\in[1,t]\big) 
&= \prod_{i=1}^t \frac{\binom{m - t + i -1}{k}}{\binom{m}{k}} = \prod_{i=1}^t \frac{\binom{m - i}{k}}{\binom{m}{k}} \nn \\
&= \prod_{i=1}^t\prod_{j=0}^{i-1}\left(1-\frac{k}{m-j}\right) = \prod_{j=1}^t\left(1-\frac{k}{m-j+1}\right)^{t-j+1}.
\end{align}
Since $j \leq t = o(\log n)$, $m = \Theta(n)$ and $n/\log^2n= o(k)$, we have
\begin{equation}
\prob_{\eta,x}\big(x_{i-1}\not\in R_{\leq i}\text{ for all }i\in[1,t]\big) 
= [1+o(1)]\,(1-k/m)^{t(t+1)/2} = (1-\alpha_n)^{t(t+1)/2} + o(1).
\end{equation}
Putting this together with~\eqref{equ:singlepathprob} and inserting it into~\eqref{equ:tautailprobsum}, we get
\begin{align}
\prob_{\eta,x}(\tau > t) = [(1-\alpha_n)^{t(t+1)/2} + o(1)]\sum_{x_0\cdots x_t\in\Gamma_t^\eta(x)}
\prod_{i=1}^t \frac{1}{\deg(x_i)} = (1-\alpha_n)^{t(t+1)/2} + o(1),
\end{align}
since, for each path $x_0\cdots x_t$, the product $\prod_{i=1}^t \frac{1}{\deg(x_i)}$ is the probability that 
the random walk without backtracking on the static graph given by $\eta$ follows the path, and we sum 
over all paths starting from $x$.
\end{proof}


\end{document}